\documentclass[runningheads]{llncs}
\usepackage{amsmath, amssymb}
\usepackage{graphicx} 
\usepackage{amsfonts}
\usepackage{xcolor}
\usepackage{hyperref}
\usepackage[utf8]{inputenc}
\usepackage{subcaption}
\usepackage[boxruled]{algorithm2e}
\usepackage{booktabs}
\usepackage{cite}
\usepackage[title]{appendix}
\usepackage[T1]{fontenc}
\SetCommentSty{commfont}
\SetKwComment{Comm}{$\rhd\ $}{}
\newcommand{\N}{\mathcal{N}}
\newcommand{\I}{\mathbf{I}}
\newcommand{\E}{\mathbb{E}}
\newcommand{\diff}{\mathrm{d}}

\DeclareMathOperator*{\argmax}{argmax}
\newtheorem{assumption}[theorem]{Assumption}
\newtheorem{cor}[theorem]{Corollary}
\newtheorem{lem}[theorem]{Lemma}

\usepackage{color}

\begin{document}
\title{Global convergence analysis of mixtures of Exponential densities}
%
%
\author{Rajita Chandak\inst{1} \and
Kathryn Dullerud\inst{1}}
\authorrunning{Chandak and Dullerud}
%
\institute{
  Institute of Mathematics, \'{E}cole Polytechnique F\'{e}d\'{e}rale de Lausanne
  (EPFL), Switzerland
  \email{rajita.chandak@epfl.ch}
}
\maketitle              
\begin{abstract}
  The theoretical foundations of the EM algorithm are often thought of
  in the context of Gaussian mixture models, However, the practical use cases of
  the EM algorithm span beyond Gaussian models. This paper establishes the first
  step towards understanding the behavior of the EM algorithm under mixtures of
  non-Gaussian densities. We show that a mixture of two Exponential
  distributions can be approximated by the EM algorithm at the sub-Exponential
  rate of convergence in at most $\log (n)$ iterations. The results here show
  that extending away from Gaussian mixture models does not affect the statistical
  performance of the EM algorithm. Furthermore, we present generalizations of
  typical assumptions in the Gaussian setting like minimum mean-separation and
  signal-to-noise ratio to the sub-Exponential setting.
  A simulation study is used to highlight the empirical performance of EM for
  mixtures of exponentials with promising results for the extension of
  existing theory to a larger class of mixture models.
  \keywords{EM Algorithm \and Mixture models \and Convergence rates}
\end{abstract}

\section{Introduction}\label{intro}

Maximum likelihood estimation is often considered a statistician's bread and
butter for parametric estimation due to its many desirable properties, such as
asymptotic efficiency and consistency. However, direct computation of the
maximum likelihood estimates is not always tractable, even for many cases of
parametric models. One particularly popular case of intractability is mixtures
of parametric densities. Mixture models are highly flexible structures
that are used in a number of statistical settings, from gene expression
analysis~\cite{ghosh2002mixture}
to natural language processing~\cite{blei2003latent}, that importantly allow
for the modeling of data heterogeneity~\cite{mclachlan2019finite}. As such,
there is great interest in methods or heuristics that attempt to apply the
likelihood principle in parametric estimation for these models. One of the most
popular approaches to such problems is the Expectation-Maximization Algorithm
(EM), introduced formally by~\cite{dempster1977maximum}, but the form of which
was used in specific settings prior to their publication, notably
in~\cite{baum1970maximization, sundberg1974maximum}.

The EM Algorithm owes its success to its relatively simple algorithmic
formulation and to the fact that it has been shown to produce good results in
practice \cite{redner1984mixture, daskalakis2017ten}.
However, up until quite recently, little was understood about its global
statistical guarantees, even in specialized cases. Most recent work, as a
result, has focused on Gaussian mixture models (GMMs)
\cite[and references therein]{wu2019randomly,
  zhao2020statistical, dwivedi2020singularity} and
mixtures of linear regressions with Gaussian errors (MLRs) \cite{kwon2019global,
  kwon2020converges, tao2024convergence}, many with further simplifications like
equally-weighted two-component mixtures. The
global convergence properties of other types of mixture models, however, remain
largely unexplored, especially, under the EM umbrella.

This paper provides insight into the applicability of EM to mixtures of two
exponential densities with theoretical guarantees on the rate of convergence,
that we conjecture is near-minimax optimal for mixtures of sub-Exponential
distributions. Furthermore, we investigate the
limiting regimes of exponential mixtures (analogous to the degenerate setting in
GMMs) that provide insight into the analogous notions of ``well-defined'' exponential mixtures
and SNR type conditions that are typically found in the GMM literature.
The goal of this paper is to provide a foundational basis of establishing the
theory of mixture models to non-Gaussian cases and thus extending the
applicability of iterative algorithms like EM to a larger class of models.

In order to inform the specifications of applying the EM algorithm to mixtures
of exponential densities, we first highlight existing results in the literature.
Here we summarize relevant literature, emphasizing primarily the two key aspects of
our objective (1) mixture models and (2) EM algorithm with an emphasis on
recent advances within the fields that are particularly relevant to the present work.

\subsubsection{Mixture models}
Mixture models are important probabilistic tools for modeling in a
variety of settings, due to their ability to flexibly model complex
distributions~\cite{mclachlan2019finite, lindsay1995mixture}. In particular, a
mixture model is said
to represent a population composed of distinct sub-populations, the
identification of which is generally unknown for observed data.
This work is related to literature concerning parameter estimation for finite
mixture models. Though not the focus of the present work, there have been many
different approaches to resolving parameter estimation of parametric mixture
models, largely focused on the mixture of Gaussians. These include method of
moments~\cite{anandkumar2012method}, which uses the notion of matching moments
to identify the parameters in terms of the moment generating function and
spectral methods~\cite{vempala2004spectral, kannan2005spectral}, which use
principal components of the singular-value decomposition (SVD) of the data to
identify (non-overlapping) mixture components.
Most commonly, however, is the use of the EM algorithm to identify mixture
models, for which there is a plethora of applied work as well as a recent surge
in theoretical work, as described in further detail below.

\subsubsection{EM algorithm}
The EM algorithm was formalized by~\cite{dempster1977maximum}, where it
was introduced with specific applications to mixtures of Gaussian densities and
linear regressions. The first theoretical guarantees, however, were provided by
~\cite{wu1983convergence} who proved the algorithm, applied to mixtures of
curved exponential family densities always converges to stationary points of the
likelihood under mild conditions. Furthermore, when applied to mixtures of
unimodal densities with sufficient smoothness, the algorithm finds the MLE.~\cite{redner1984mixture} proved local convergence, without precise rates, to
the MLE for exponential family mixture
models.~\cite{balakrishnan2017statistical} provide non-asymptotic local
convergence guarantees for EM-type algorithms under smoothness conditions, that
can be shown to apply to Gaussian mixtures.

More work has focused on specific simplifications of the Gaussian mixture
model (GMM) to obtain stronger guarantees or more precises statements on
consistency, including rates of convergence.~\cite{xu2016global, daskalakis2017ten} both showed convergence of the
population EM iterates to the true parameter in the balanced (equally-weighted)
2-component Gaussian mixture case. Further,~\cite{xu2016global} show that, under
the same balanced 2-GMM, in the double limit as both sample size and number of
iterations go to infinity, the difference between the population and
finite-sample iterates disappear.
Additionally,~\cite{daskalakis2017ten} prove that in the finite-sample case the
iterates will converge with high probability under a sample-splitting scheme and with a
warm-start after random initialization, provided sufficient number of samples for each
iterate.~\cite{wu2019randomly} improve on these
by proving a near minimax-optimal rate for the 2-component Gaussian case with
random initialization.

Recently, work has considered the EM Algorithm for GMMs with either more than 2
components or unequal mixing weights~\cite{zhao2020statistical}, or even a
mispecified number of components~\cite{dwivedi2020singularity}. Some
impossibility results on global convergence for more than 2-component
GMM have also been identified~\cite{jin2016local}, showcasing that the application
of EM is neither straightforward nor universal.

Another recent line of interest concerning EM is with applications to mixtures of
linear regressions (MLRs) which can, in some cases when conditioning on
covariates, be written as an alternative formulation of the classification
problem \cite{viele2002modeling, tao2024convergence} and in other cases can be
of independent interest. Recent work, similar to the classification case has
focused on Gaussian error linear regression models, with a larger emphasis on
2-component mixtures. In particular, ~\cite{kwon2019global} prove global convergence of
the EM for a mixture of two linear regressions, while~\cite{kwon2020converges}
show local convergence for an arbitrary number of components for MLRs under
constraints on the signal-to-noise ratio.

\subsection{Outline}
Despite this recent flurry of work on convergence of EM in mixture model
settings, there is almost no literature on non-Gaussian mixture models, which is pertinent in a number of domains.
As a result we focus the present work on the convergence of the EM Algorithm for
balanced 2-component mixtures of exponential densities to provide a foundation
for extending theory to general mixtures of exponential family densities.

The rest of this paper is organized as follows: Section \ref{sec:prelims} covers the
necessary definitions and provides an overview of the EM Algorithm. Section
\ref{sec:results} covers both the population and finite-sample convergence
results. Section \ref{sec:sims} provides simulations that help illustrate the
theoretical results, test the tightness of the assumptions and potential
extensions of this work to general cases that could motivate future work.
Section \ref{sec:conclusion} concludes and discusses some future directions of
interest. Appendices~\ref{app:main_proofs} and~\ref{app:technical} contain all
the proofs and related technical results.

\section{Preliminaries \& Problem Set-Up}\label{sec:prelims}
We begin this section with a thorough treatment of the notation used throughout
the paper. We let capitalized letters, such as $X \in \mathbb{R}$, refer to
random variables.  Realizations of these random
variables will be given by the corresponding lower-case letters. We
let $p_\theta(\cdot)$ denote the probability density function of a continuous
random variable with parameter $\theta$. $X \sim p_\theta$ is used to denote
that the random variable $X$ is distributed according to the law of $p_\theta$.
Further, $\mathcal{L}(\cdot \hspace{1mm}; \theta)$ refers to the likelihood
function of a sample of a continuous random variable, and $\ell(\cdot
\hspace{1mm} ; \theta)$, the log-likelihood. Moreover, in a slight abuse of
notation, we write $\mathbb{E}_\theta[\cdot]$ to refer to the expectation of a
random variable with respect to its distribution parameterized by $\theta$.
We use $a_n \lesssim b_n$ to mean $a_n \leq cb_n$ for some universal constant
$c>0$, similarly $a_n \gtrsim b_n$ implies $a_n \geq cb_n$ for some universal
constant $c>0$.

We now turn to describing the principal data generation model.

\subsection{Finite Mixture Models}\label{sec:fmms}
Formally, we say a random variable $X$ comes from a mixture of distributions,
when its density can be written as:
\[
p_{\mathbb{\theta}}(x) = \sum_{k=1}^K \pi_k p_{\theta_k}(x),
\]
where $K$ is the number of components of the mixture, $\mathbb{\theta}
= [\theta_1,...,\theta_K]$ is the vector of the parameters that define the
distribution of each component. Let
$p_{\theta_k}$ be the density of the $k$th component of the mixture,
and $\pi_k$ the probability that $X$ comes from the $k$th component, sometimes
referred to as the mixing proportion. To ensure that we are generating data from
a valid distribution, we require $0 \leq \pi_k \leq 1$ for each $k$, with strict
inequality for a well-defined $K$-component mixture, and $\sum_{k=1}^K \pi_k =
1$. In practice, the mixing proportions $(\pi_1,...,\pi_K)$ are unknown and can
be treated as an additional vector of parameters that must be estimated to fully specify the
data-generating process. Here we assume $K < \infty$ to be a known constant.
Although not necessary, typically it is assumed that each of the
$K$ components come from the same parametric distribution family. As such, it is
standard language when refering to a mixture of Gaussians to mean that each
$p_{\theta_k}$ is Gaussian.

In order to facilitate mathematical analysis, mixture models are often written as latent
variable models. In particular, we may represent the component from which a
realization of the random variable $X$ comes as a random variable itself, say
$Z$. In the finite mixture case, $Z \in \{1,...,K\}$ is a discrete random
variable such that $\mathbb{P}(Z=k) = \pi_k$. Then the marginal density of $X$
can be rewritten in the following manner:
\begin{align*}
p_{\mathbb{\theta}}(x) = \sum_{k=1}^K \pi_k p_{\theta_k}(x|Z=k).
\end{align*}
A typical approach to estimating the unknown parameters $\{\theta_k,
\pi_k\}_{k=1}^K$ is to generate the maximum likelihood estimates (MLEs), which
one would hope has the desirable consistency properties.
However, with a mixture model it is not possible to directly maximize the
likelihood, due to the latent variables $Z$ that are unobserved in the data.
In particular, the likelihood, using the latent variable representation of mixture
models, takes on the form:
\begin{align*}
\mathcal{L}(X_1,..,X_n; \theta) = \prod_{i=1}^n\sum_{k=1}^K \pi_k p_{\theta_k}(X_i|Z_i=k).
\end{align*}
The EM algorithm is used precisely to approximate the maximizer of the likelihood
in an iterative manner that can be thought of as a gradient-descent approach to
iteratively get closer to the stationary point of the likelihood function,
that we hope to identify as the true maximizer.

\subsection{The EM Algorithm}
\label{sec:em}
The principle of the EM algorithm is to use the hypothetically simple likelihood
of the ``complete'' data in order to make estimates based only on the observed
data. We simplify our presentation of the EM algorithm with respect to the
latent variable formulation. Additional detailed explanations around the
construction of the EM Algorithm can be found
in~\cite{wu1983convergence,mclachlan2008algorithm, davison2008statistical}.

Let $X \in \mathcal{X}$ denote the observed data, and $Z \in \mathcal{Z}$, the
unobserved data. We assume that $(X,Z)$ is generated from a parameterized joint
distribution, denoted $p_\theta(X,Z)$. Let $X_1,...,X_n$ be an i.i.d.\ sample of the
observed data.
Since the $Z_1, \ldots, Z_n$ are never observed, the log-likelihood is
simplified by taking expectations of the latent variable.
\begin{align*}
  \mathbb{E}_{Z\sim p_{\theta'}(\cdot|X)}[\ell(X,Z; \theta)]
  = \ell(X; \theta) +
  \mathbb{E}_{Z\sim p_{\theta'}(\cdot|X)}[\ell(Z|X; \theta)].
\end{align*}
This is the expectation step of the EM algorithm. It requires some initial guess
of the parameters, $\theta'$.
Typically this expectation is denoted as a $Q$ function with respect to the
current estimate of the parameters, $\theta'$. That is,
\begin{align*}
Q(\theta|\theta') = \ell(X; \theta) + C(\theta|\theta').
\end{align*}
The natural next step is to maximize this estimated log-likelihood function.
This is precisely what is referred to as the `M-step' of the EM algorithm.
One can verify, by application of Jensen's inequality and leveraging the
concavity of the log function, that if there exists some $\tilde{\theta}$ such
that $Q(\tilde{\theta}|\theta') \geq Q(\theta'|\theta')$, then $\ell(X;
\tilde{\theta}) \geq \ell(X;\theta')$, conditional on $\theta'$.
Note that the implication does not hold in the reverse direction. That is
$\ell(X; \tilde{\theta}) \geq \ell(X;\theta') \nRightarrow
Q(\tilde{\theta}|\theta') \geq Q(\theta'|\theta')$. As a result, it is logical
to maximize $Q$ with respect
to $\theta$ in order to approximate the true maximizer of the likelihood
$\ell(X;\theta)$. This illustrates that the EM algorithm is based on the notion
of never taking a step in the ``wrong direction'', similar to the idea of
gradient descent. In fact, the $C(\cdot|\theta')$ can be thought of as a step in
gradient descent of a fixed step size, as identified
by~\cite{jamshidian1993conjugate}.

The E and M steps are then iterated over to generate a sequence of estimators
$\hat{\theta}$ that converge to a stationary point of $Q$ with respect to some
loss metric (see Algorithm~\ref{alg:EM} for pseudo-code), provided the
log-likelihood is bounded~\cite{wu1983convergence}.
\begin{algorithm}[h!]
  \caption{The Expectation-Maximization algorithm}~\label{alg:EM}
  Given data $X_1, \ldots, X_n$,
  set $t=0$, initialize $\theta_{0}$, and choose
  tolerance level $r_{\text{tol}}$ and loss function $L(\cdot, \cdot)$.
  \\
  \Repeat{$L(\theta_{t}, \theta_{t-1}) < r_{\text{tol}}$}{
    Compute $Q(\theta|\theta_{t-1})$ conditional on the sample $x_1, \ldots, x_n$ \hspace{0.2in}\tcp{E-step}
      $\theta_{t+1} = \argmax_{\theta\in\Theta} Q(\theta|\theta_{t})$ \hspace{1.8in}\tcp{M-step}
    Set $t = t + 1$\hspace{1.9in}\tcp{Update iteration count}
  }
\end{algorithm}

The main question now is whether the limit point of the sequence of estimators
approximates the true parameter consistently in a finite number of iterations under
minimal assumptions on the data generating process.~\cite{wu1983convergence} can be credited with the first successful attempt to
characterize the convergence of the EM Algorithm. The authors prove that the EM
algorithm always converges to a stationary point of $\ell(X;\theta)$
whenever $Q(\theta|\theta')$ is continuous in both $\theta$ and $\theta'$ and
further, that this condition is easily satisfied for mixtures of curved
exponential family distributions.

Though the E- and M-steps are treated
separately in the general formulation, one attractive property of the EM is the
existence of closed-form solutions for finite mixtures of
Gaussians~\cite{mclachlan2008algorithm, wu2019randomly}.
Thus, most of the existing literature exclusively deal with Gaussian mixture
models. In particular,~\cite{xu2016global, daskalakis2017ten} show that for the
symmetric 2-component GMM, formulated as
\begin{align*}
  p_{\theta} = \frac{1}{2}\N_d(\theta, \I_d) + \frac{1}{2}\N_d(-\theta, \I_d)
\end{align*}
where $\N_d$ represents a $d-$dimensional Gaussian distribution and $\I_d$ is a
$d\times d$ identity matrix, the population EM iterates $\theta_t$ converge to
the true parameters at a geometric rate.
Furthermore, these results show that an increasing number of iterations are required
to achieve convergence of the algorithm, although the dependency of the number
of iterations on $n$ varies
between $\log(n)$ and $\sqrt{n}$, based on additional assumptions considered in
each paper. In particular, we point out that~\cite{wu2019randomly} achieve
convergence in high-probability of the randomly initialized algorithm in
$O(\sqrt{n})$ steps along with proving minimaxity of the EM algorithm for the
well-defined symmetric 2-GM model.
Minimax optimality of generalized mixtures (multiple components ($K>2$) or
non-Gaussian mixtures) remains an open question.

Despite interest in producing unified results for
mixtures from exponential families or certain sub-families in the past, little
work has covered global convergence results for EM outside of Gaussian mixture
models. The most relevant paper to non-Gaussian models
is~\cite{redner1984mixture}. The authors present some initial results on
mixtures of exponential families, where the authors show that the
well-initialized EM algorithm converges to the true MLE if the Fisher
information is positive-definite. The main drawback to this result is the
un-verifiable condition of the Fisher information, and arguably harder to
verify, sufficiently close initialization of the algorithm.
This work aims to provide a new direction of analysis that can help close the
long-standing gap in the analysis of EM-based approaches.

\section{Main Results}\label{sec:results}
This section provides mathematical guarantees of the EM algorithm
applied to a balanced mixture of two exponentials to act as a generalized analogy of
the balanced symmetric Gaussian mixture case with minimal assumptions, thus filling in a
foundational gap in the existing literature at the cross section of mixture
models and iterative algorithmic estimators.

Under the set-up defined in Section~\ref{sec:fmms}, we let $(X,Z)$ be a pair of
random variables such that $X$ is generated from one of two exponential
distributions determined by the latent variable $Z$ which represents the
component of the mixture. The mixture distribution can then be written as
\begin{align}
  \label{eq:mix_dens}
  P_{\beta^*} = \frac{1}{2} \text{Exp}(\beta^*) + \frac{1}{2}\text{Exp}(\beta^*/\alpha ),
\end{align}
where $\beta^*$ is the scale parameter of the exponential density and
$\alpha >1$ is some fixed constant that defines the shift in parameter for the
second component of the mixture. There is no symmetry analogy that can be made
with Exponential distributions by definition, and so we propose a multiplicative
relationship between the parameters as a substitute.
It should be clear that assuming $\alpha > 1$ does not constrain our
analysis, for whenever $\alpha < 1$, we can simply redefine $\alpha' =
\frac{1}{\alpha}$ such that the scale parameter of the second component becomes
$\beta^*/\alpha'$. Here we consider only balanced mixtures, that is to say, $Z \sim
\text{Ber}(\frac{1}{2})$, to simplify our formulation in a manner that fits with
many of the earlier studies of Gaussian mixtures. However, as we will show in
Section~\ref{sec:sims}, our results appear to hold for a more general class of distributions.
Note that with these assumptions, we can rewrite the joint density as
\begin{align}
  \label{eq:mix_density}
  p_{\beta^*}(x,z) =
  \left(\frac{e^{-x/\beta^*}}{\beta^*}\right)^{z}
  \left( \frac{\alpha e^{-x\alpha/\beta^*}}{\beta^*} \right)^{(1-z)}.
\end{align}
For the remainder of the paper we let $\beta^*$ refer to the true value of the
parameter of interest. We let $\beta_t$ refer to the population EM
estimates, that is, the estimates given by EM if we had infinite sample size,
and $\hat{\beta}_t$ to the finite-sample estimates.
We capture the assumption that the data observed is generated by a mixture of
two exponentials below for ease of reference.
\begin{assumption}[DGP]
  \label{as:dgp}
  The observations $X_1, \ldots, X_n$ are i.i.d.\ samples from
  the following model:
\begin{align*}
  p_{\beta^*}(x,z) =
  \left(\frac{e^{-x/\beta^*}}{\beta^*}\right)^{z}
  \left( \frac{\alpha e^{-x\alpha/\beta^*}}{\beta^*} \right)^{(1-z)},
\end{align*}
where $\beta^*>0$, $\alpha >1$ and $Z \sim \text{Ber}(\frac{1}{2})$.
\end{assumption}
Now, we can identify a
closed-form solution for each step of the EM algorithm, captured in the
following proposition.
\begin{proposition}
  \label{prop:closed_form}
  Suppose Assumption~\ref{as:dgp} holds. Then, each step of the finite-sample EM
  Algorithm can be written in closed form as
\begin{align}
  \label{eq:beta_hat}
    \hat{\beta}_{t+1}
    = \frac{\alpha}{n} \sum_{i=1}^n X_i
  - \frac{(\alpha - 1)}{n}
  \sum_{i=1}^n X_i \big(1+\alpha e^{(1-\alpha)X_i / \hat{\beta}_t}\big)^{-1}.
\end{align}
In the limit as $n \rightarrow \infty$, we recover the closed-form for the
population EM to be
\begin{align}
  \label{eq:beta_pop}
  \beta_{t+1}
  = \alpha \mathbb{E}[X]
  - (\alpha - 1)\mathbb{E}\bigg[X \big(1+\alpha e^{(1-\alpha)X / \beta_t}\big)^{-1}\bigg].
\end{align}
\end{proposition}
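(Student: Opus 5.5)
The plan is to carry out the E-step and M-step explicitly for the complete-data likelihood in Assumption~\ref{as:dgp}. Then I pass to the population version by the law of large numbers.

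\emph{E-step.} Given the current iterate $\hat\beta_t$, I first compute the posterior probability that observation $X_i$ came from the first component ($Z_i=1$). The two component densities are $e^{-x/\beta}/\beta$ and $\alpha e^{-\alpha x/\beta}/\beta$, and the mixing weights are equal, so the factors $\tfrac12$ and $1/\beta$ cancel. This gives
\begin{align*}
  w_i(\hat\beta_t) := \mathbb{P}_{\hat\beta_t}(Z_i = 1 \mid X_i)
  = \frac{e^{-X_i/\hat\beta_t}}{e^{-X_i/\hat\beta_t} + \alpha e^{-\alpha X_i/\hat\beta_t}}
  = \big(1+\alpha e^{(1-\alpha)X_i/\hat\beta_t}\big)^{-1}.
\end{align*}
The complete-data log-likelihood is linear in $z$. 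Taking its conditional expectation therefore gives, up to terms not depending on $\beta$,
\begin{align*}
  Q(\beta\mid\hat\beta_t) = -n\log\beta - \frac{1}{\beta}\sum_{i=1}^n X_i\big(w_i + \alpha(1-w_i)\big) + \text{const}.
\end{align*}

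\emph{M-step.} Set $S_t := \sum_i X_i(\alpha - (\alpha-1)w_i)$. Since $w_i\in(0,1)$ and $X_i>0$ almost surely, we have $S_t>0$. The function $\beta\mapsto -n\log\beta - S_t/\beta$ on $(0,\infty)$ tends to $-\infty$ at both ends. Its derivative $-n/\beta + S_t/\beta^2$ has a unique zero, which is therefore the global maximizer:
\[
\hat\beta_{t+1} = S_t/n = \frac{\alpha}{n}\sum_i X_i - \frac{\alpha-1}{n}\sum_i X_i w_i ,
\]
which is exactly \eqref{eq:beta_hat}.

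\emph{Population limit.} Fix $\beta_t>0$. The summands $X_i$ and $X_i w_i(\beta_t)$ are i.i.d. with $0\le X_i w_i\le X_i$, and $\mathbb{E}[X]=\tfrac12\beta^*(1+1/\alpha)<\infty$. By the strong law of large numbers the two empirical averages converge almost surely to $\mathbb{E}[X]$ and $\mathbb{E}[X(1+\alpha e^{(1-\alpha)X/\beta_t})^{-1}]$. This yields \eqref{eq:beta_pop}. Equivalently, the same computation applied to the population objective $\mathbb{E}[Q]$ gives the same maximizer, with the exchange of expectation and differentiation justified by the same domination.

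The only point needing care is this interpretation of the limit. The population operator is the $n\to\infty$ limit of the finite-sample map applied at a \emph{fixed} previous iterate $\beta_t$, not the limit of the random sequence $\hat\beta_t$ itself. I would state this explicitly. Everything else is routine algebra.
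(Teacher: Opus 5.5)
Your proposal is correct and follows the same route as the paper: the posterior weight via Bayes' formula, the $Q$-function obtained by substituting it into the complete-data log-likelihood (which is linear in $z$), and solving $\partial Q/\partial\beta = 0$, followed by passing to the limit $n\to\infty$. Two steps are your own additions that make explicit what the paper leaves implicit: checking that the critical point is the global maximizer (using $S_t>0$), and invoking the strong law of large numbers at a fixed previous iterate $\beta_t$.
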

The proof of this proposition, as well as all other results in this
section are deferred to Appendix~\ref{app:main_proofs}. Additional technical
results used in proving the results of this section that may be of independent
interest are provided in Appendix~\ref{app:technical}.
Proposition~\ref{prop:closed_form} shows that the expectation and maximization
steps can be reduced to a single closed-form update equation that allows us to
directly analyze the behavior of subsequent (finite-sample or population)
estimates, provided a current estimate. We will study the properties of
these update equations to set up a recursion for which we can establish
convergence to show that both the population and finite-sample EM estimates
converge to the true parameters next.

\subsection{Population EM Convergence Analysis}
\label{sec:pop-results}
Given the simplified recursive equations in Proposition~\ref{prop:closed_form},
it is possible to now understand how the sequence of EM estimators evolve,
providing a basis for determining the limiting value of the sequence as well as
the rate of convergence. We begin with convergence in the population EM
case (as defined in~\eqref{eq:beta_pop}), which will help guide our
understanding of the EM algorithm as well as provide a motivation
for how one may expect the empirical EM algorithm to evolve.

\begin{theorem}[Recursive bound on population EM]
  \label{thm:pop_recursion}
  Suppose Assumption~\ref{as:dgp} holds and some initial estimate $\beta_{0}>0$ is provided.
    For all $t \geq 0$, the recursion of the population EM~\eqref{eq:beta_pop} satisfies
    \begin{equation*}
        |\beta_{t+1} - \beta^*| \leq (1-\kappa_{\alpha})|\beta_t-\beta^*|,
    \end{equation*}
    where $\kappa_\alpha = \frac{\alpha+1}{2} - \frac{\alpha-1}{2}\left(\frac{3}{e} + \frac{1}{\alpha} \right)$.
\end{theorem}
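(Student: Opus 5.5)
Write the population update as $\beta_{t+1}=M(\beta_t)$, where
\[
M(\beta)=\alpha\E[X]-(\alpha-1)\E\big[X w_\beta(X)\big], \qquad w_\beta(x)=\big(1+\alpha e^{(1-\alpha)x/\beta}\big)^{-1},
\]
as in~\eqref{eq:beta_pop}. The plan has three steps.

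\emph{Step 1: $\beta^*$ is a fixed point.} At $\beta=\beta^*$, $w_{\beta^*}(x)$ is exactly the posterior probability $\mathbb{P}(Z=1\mid X=x)$ under Assumption~\ref{as:dgp}. Indeed, the ratio of the two component densities is $\alpha e^{-x(\alpha-1)/\beta^*}$. Hence
\[
\E[Xw_{\beta^*}(X)]=\E[X\,\mathbb{1}\{Z=1\}]=\tfrac12\beta^*,
\qquad
\E[X]=\tfrac12\beta^*+\tfrac12\beta^*/\alpha .
\]
Substituting gives $M(\beta^*)=\tfrac{\alpha\beta^*}{2}+\tfrac{\beta^*}{2}-\tfrac{(\alpha-1)\beta^*}{2}=\beta^*$. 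Consequently
\[
\beta_{t+1}-\beta^*=M(\beta_t)-M(\beta^*)=-(\alpha-1)\,\E\big[X\,(w_{\beta_t}(X)-w_{\beta^*}(X))\big].
\]

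\emph{Step 2: mean value theorem.} Apply the mean value theorem in $\beta$, so that $|\beta_{t+1}-\beta^*|=|M'(\xi)|\,|\beta_t-\beta^*|$ for some $\xi$ between $\beta_t$ and $\beta^*$. A direct computation gives
\[
\partial_\beta w_\beta(x)=-\frac{(\alpha-1)x}{\beta^2}\,w_\beta(1-w_\beta),
\]
so that
\[
M'(\beta)=\frac{(\alpha-1)^2}{\beta^2}\,\E\big[X^2 w_\beta(X)(1-w_\beta(X))\big]\ge 0 .
\]
In particular $M$ is monotone, which keeps the iterates positive and on a fixed side of $\beta^*$. It therefore suffices to show $\sup_{\beta>0} M'(\beta)\le 1-\kappa_\alpha$ uniformly, or at least on the interval swept by the iterates.

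\emph{Step 3: the uniform bound (main obstacle).} I would not integrate $\E[X^2 w(1-w)]$ exactly. Instead, I would bound it through the substitution $u=(\alpha-1)x/\beta$, which turns the integrand into $\frac{\beta^2}{(\alpha-1)^2}\,u^2\,\sigma(1-\sigma)$ with $\sigma=(1+\alpha e^{-u})^{-1}$. The prefactor $\beta^2/(\alpha-1)^2$ cancels the one in $M'$, leaving an expectation over $u$. The two mixture components then contribute separately.

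The constant $3/e$ strongly suggests using an elementary bound such as $\sup_{u\ge0}u^k e^{-u}=(k/e)^k$, or a bound of the form $u^2 e^{-u}\cdot(\cdots)\le 3/e$ after exploiting the $1/(1+\alpha e^{-u})^2$ factor. The $1/\alpha$ term should come from the $\mathrm{Exp}(\beta^*/\alpha)$ component, whose mean is $\beta^*/\alpha$. The target is
\[
M'(\beta)\le \frac{\alpha-1}{2}\Big(\frac{3}{e}+\frac1\alpha\Big)-\frac{\alpha-1}{2}=1-\kappa_\alpha ,
\]
which follows from the identity $1-\kappa_\alpha=\frac{\alpha-1}{2}\big(\frac3e+\frac1\alpha\big)-\frac{\alpha-1}{2}$. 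This suggests writing $M'$ as a leading part involving $\E[X]$, contributing $\frac{\alpha-1}{2}(\cdot)$, minus a remainder that is bounded below.

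I expect the hardest part to be making these elementary inequalities close with exactly the stated constant, uniformly in $\beta$. My first attempt would be to bound $w(1-w)\le \min\{w,\,1-w\}$ and treat each mixture component with its own sup bound. If that proves too lossy, I would restrict $\xi$ to the interval between $\beta_t$ and $\beta^*$, using the monotonicity of $M$ from Step 2.
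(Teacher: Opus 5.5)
\textbf{There is a genuine gap in Step~3.} Your Steps~1 and~2 are correct, but Step~3 aims at a bound that is false, so this route cannot prove the theorem as stated.

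The uniform inequality $\sup_{\beta>0}M'(\beta)\le 1-\kappa_\alpha$ already fails for $\alpha=2$, $\beta^*=1$. Substituting $x=\beta u$ in your formula for $M'$ gives
\[
M'(\beta)=\beta\int_0^\infty \phi(u)\Bigl(\tfrac12 e^{-\beta u}+e^{-2\beta u}\Bigr)\,du,
\qquad
\phi(u)=\frac{2u^2e^{-u}}{(1+2e^{-u})^2}.
\]
Numerically $M'(1/4)\approx 0.33$, whereas $1-\kappa_2=\tfrac12\bigl(\tfrac3e-\tfrac12\bigr)\approx 0.30$. At the fixed point $M'(1)\approx 0.17$, so the failure only occurs away from $\beta^*$.

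Your fallback of restricting $\xi$ to the segment between $\beta_t$ and $\beta^*$ does not help. The theorem is claimed for every $\beta_0>0$, e.g.\ $\beta_0=\beta^*/4$, and the mean-value point cannot be localized further. What the statement needs is a bound on the secant slope $\bigl(M(\beta_t)-M(\beta^*)\bigr)/(\beta_t-\beta^*)$ through the fixed point. That slope is an average of $M'$ and is genuinely smaller than $\sup M'$ (about $0.25$ for $\beta_t=\beta^*/4$), so a pointwise bound on $M'$ is too strong to hold. Separately, your Step~3 as written contains no actual inequality chain, only guesses at where the constants come from.

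\textbf{The missing idea is the one the paper uses.} Write $f(\beta,\mu)$ for the update with the expectation taken under $P_\mu$, so $M(\beta)=f(\beta,\beta^*)$. The key fact is that $f(\beta,\beta)=\beta$ holds for \emph{every} $\beta>0$, not only at $\beta^*$. The paper then applies the mean value theorem in the \emph{second} argument with $\beta_t$ held fixed, $f(\beta_t,\beta^*)-f(\beta_t,\beta_t)=(\beta^*-\beta_t)\,\partial_\mu f(\beta_t,\tilde\mu)$, which gives
\[
\beta_{t+1}-\beta^*=(\beta_t-\beta^*)\bigl(1-\partial_\mu f(\beta_t,\tilde\mu)\bigr).
\]
This is exactly the secant slope you need.

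The bound $\partial_\mu f(\beta,\mu)\ge\kappa_\alpha$ holds uniformly in $(\beta,\mu)$ (Lemma~\ref{lem:partial_deriv}). To get it:
\begin{itemize}
\item split the integrand of $\partial_\mu\E_\mu[Xw_\beta(X)]$ by the sign of $\partial_\mu$ of each component density, which is negative on $[0,\mu]$ and on $[0,\mu/\alpha]$ respectively;
\item use $w_\beta\le 1$ where the integrand is positive and $w_\beta\ge(1+\alpha)^{-1}$ where it is negative;
\item integrate exactly.
\end{itemize}
The $3/e$ comes from $\int_0^1(y^2-y)e^{-y}\,dy=1-3/e$, and the $1/\alpha$ from differentiating the mean $\mu/\alpha$ of the second component, not from $\sup_u u^ke^{-u}$.

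Your monotonicity observation is then used only to show $1-\partial_\mu f(\beta_t,\tilde\mu)\ge0$, so the contraction factor lies in $[0,1-\kappa_\alpha]$. Differentiating $f(\beta,\beta)=\beta$ gives $\partial_\beta f+\partial_\mu f=1$ only on the diagonal. Hence your $M'$ and the paper's $1-\partial_\mu f$ agree at $\beta^*$, which is why your route works locally but not globally.
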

Notice that as stated, Theorem~\ref{thm:pop_recursion} does not always guarantee
convergence. Rather, $\kappa_{\alpha} \in (0, 1)$ must hold for the result of
Theorem~\ref{thm:pop_recursion} to be non-trivial.
In terms of our model parameters, this translates to constraining $\alpha \in (1, \alpha_{\max})$
where $\alpha_{\max} \approx 11.49$. Under this additional constraint, we can
get the following convergence of the population EM iterates.
\begin{cor}[Convergence of the Population EM]
  \label{cor:pop_conv}
  Suppose all the conditions of Theorem~\ref{thm:pop_recursion} hold.
  Additionally, let $\alpha \in (1, \alpha_{\max})$. Then,
  $|\beta_t - \beta^*| \leq \varepsilon$
  if
  \begin{align*}
    t \geq
    \frac{\log\left(\frac{|\beta_0-\beta^*|}{\varepsilon}\right)}
    {\log ((1-\kappa_{\alpha})^{-1})},
  \end{align*}
  where $\kappa_\alpha = \frac{\alpha+1}{2} - \frac{\alpha-1}{2}\left(\frac{3}{e} + \frac{1}{\alpha} \right)$.
\end{cor}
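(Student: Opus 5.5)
The plan is to iterate the contraction of Theorem~\ref{thm:pop_recursion} and then solve for the number of steps.

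First, check that the contraction factor $1-\kappa_\alpha$ lies strictly in $(0,1)$ under the extra constraint $\alpha\in(1,\alpha_{\max})$. Rewriting,
\begin{align*}
\kappa_\alpha = \frac{\alpha+1}{2} - \frac{(\alpha-1)}{2}\Big(\frac{3}{e}+\frac{1}{\alpha}\Big)
= \frac{1}{2}\Big[\alpha\Big(1-\frac{3}{e}\Big) + \frac{3}{e} + \frac{1}{\alpha}\Big].
\end{align*}
At $\alpha=1$ this equals $1$. It is strictly decreasing for $\alpha>1$, since its derivative is $\frac12(1-3/e-1/\alpha^2)<0$ because $1-3/e<0$. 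Hence $\kappa_\alpha<1$ for all $\alpha>1$. Moreover, $\kappa_\alpha>0$ exactly up to the positive root $\alpha_{\max}$ of $(1-3/e)\alpha^2+(3/e)\alpha+1=0$, which is numerically $\approx 11.49$. So $0<1-\kappa_\alpha<1$ on $(1,\alpha_{\max})$, and $\log((1-\kappa_\alpha)^{-1})>0$.

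Second, apply Theorem~\ref{thm:pop_recursion} inductively in $t$. Since the theorem holds for every $t\ge 0$ and every $\beta_t>0$, we get $|\beta_t-\beta^*|\le (1-\kappa_\alpha)^t|\beta_0-\beta^*|$. Staying within the domain $\beta_t>0$ needs a remark. One way is to note from \eqref{eq:beta_pop} that $\beta_{t+1}\ge \alpha\E[X]-(\alpha-1)\E[X]=\E[X]>0$, because the weight $(1+\alpha e^{(1-\alpha)X/\beta_t})^{-1}$ lies in $(0,1)$.

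Finally, it suffices that $(1-\kappa_\alpha)^t|\beta_0-\beta^*|\le\varepsilon$. Taking logarithms, this is $t\log((1-\kappa_\alpha)^{-1})\ge \log(|\beta_0-\beta^*|/\varepsilon)$. Dividing by the positive quantity $\log((1-\kappa_\alpha)^{-1})$ gives the stated threshold. If $|\beta_0-\beta^*|\le\varepsilon$, the bound is trivial for all $t\ge0$.

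The only nonroutine step is the first one: the sign analysis of $\kappa_\alpha$ and the identification of $\alpha_{\max}$. It reduces to the monotonicity argument and the quadratic above.
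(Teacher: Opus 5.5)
Your proof is correct and follows the paper's argument: you iterate the contraction of Theorem~\ref{thm:pop_recursion} to get $|\beta_t-\beta^*|\le(1-\kappa_\alpha)^t|\beta_0-\beta^*|$, then solve for $t$ by taking logarithms. Your extra checks are also correct and make explicit what the paper leaves implicit: that $\kappa_\alpha\in(0,1)$ exactly on $(1,\alpha_{\max})$ (the paper only remarks on this at the end of the proof of Lemma~\ref{lem:partial_deriv}), and that the iterates stay positive because $\beta_{t+1}\ge\E[X]>0$.
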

At this point, it is worth discussing the implications of the range of $\alpha$
that allows for convergence of the population EM iterate.
Let us start by considering the densities of each of the two components in
\eqref{eq:mix_density}. As $\alpha$ increases in value, the second
component, with scale parameter $\beta^*/\alpha$, concentrates towards zero
exponentially fast. The intersection point of the two components, thus, becomes
a point of almost perfect separation between the two densities since the second
component places almost no weight to the right of the intersection point, while
the first component, provided $\beta^*$ is not too large itself, places a
significant probability weight to the right. This is illustrated in
Figure~\ref{fig:alpha} where $\beta^*$ is fixed and different $\alpha$ values
are considered.
It is visible from Figure~\ref{fig:alpha} that the density of the
mixture is increasingly overwhelmed by the second component, essentially driving
the weight of the first component to zero despite observations coming from both
components, as shown in Figure~\ref{fig:alpha-sample}.
Figure~\ref{fig:alpha-sample} shows that the samples are essentially completely
disjoint in which component they are generated from.
As such, the algorithm is unable to interpret $\beta^*$ clearly from the data,
making convergence of the iterates impossible as $\alpha \rightarrow \infty$.
We leave the heuristic discussion here and refer the reader to the
discussion following the proof of Theorem~\ref{thm:pop_recursion} for the
mathematical formulation of this phenomenon.
\begin{figure}[h!]
    \centering
    \includegraphics[width=0.8\textwidth]{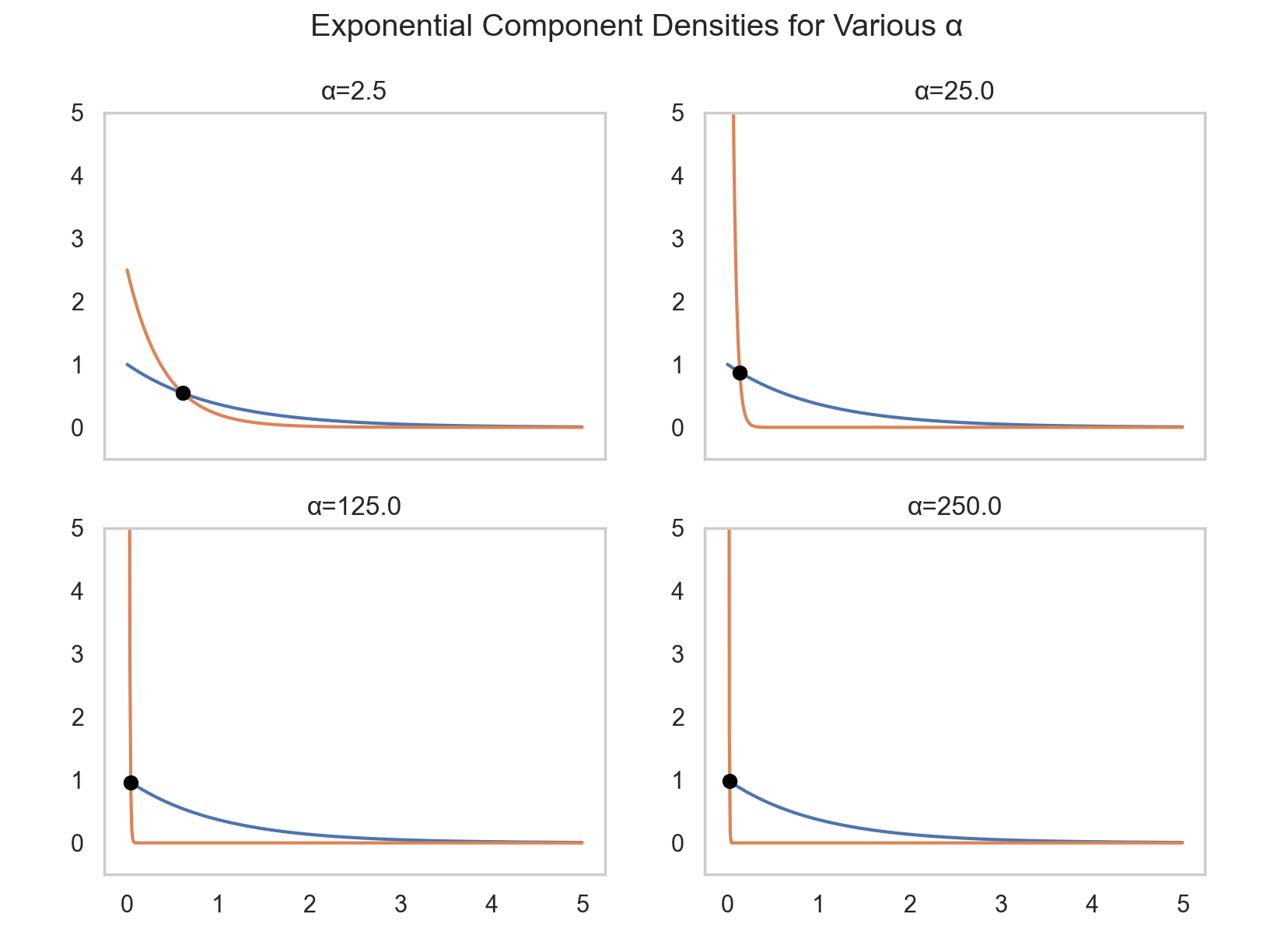}
    \caption{Exponential densities with scale parameter $\beta^*=2$ in blue and
      $\frac{\beta^*}{\alpha}$ in orange.
       The black point shows the intersection of these two densities, which tends to $0$ as $\alpha \rightarrow \infty$.}
    \label{fig:alpha}
\end{figure}

\begin{figure}[h!]
    \centering
    \includegraphics[width=0.6\textwidth]{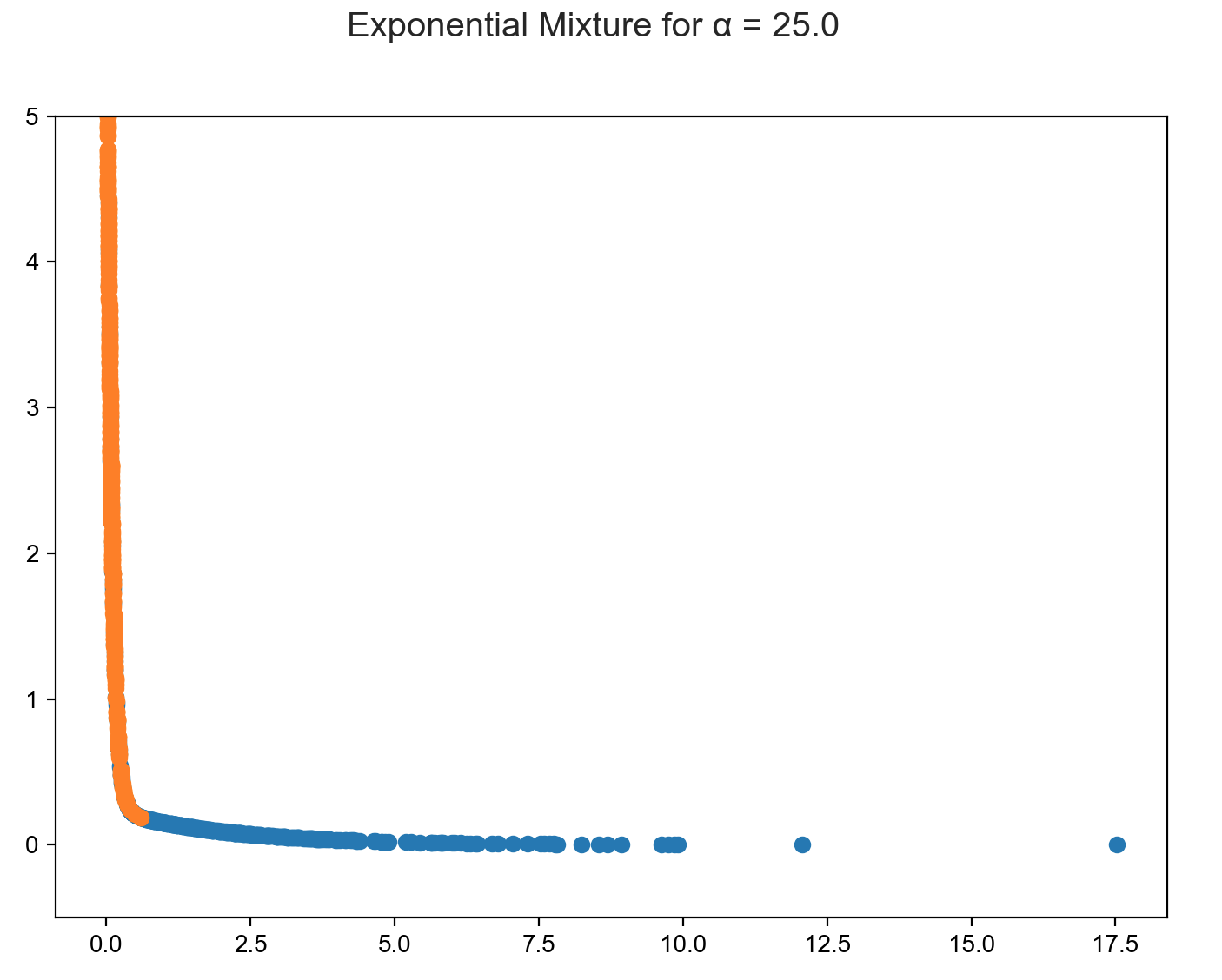}
    \caption{Scatter plot of sample size $n=1000$ from the balanced exponential
      mixture with $\beta^*=2$ and $\alpha=25$. Points in orange come from the component
      with scale $\frac{\beta^*}{\alpha}$, points in blue come from the component with
      scale $\beta^*$.}
    \label{fig:alpha-sample}
\end{figure}

It may also be of interest to note that having a restriction on $\alpha$ is not
completely unexpected as similar constraints exist for the symmetric mixture of
two Gaussians in the form of minimum separation. All
existing theoretical results on Gaussian mixtures place a lower bound on the
distance between the means of the two components in
order to show any valid convergence
\cite[amongst others]{wu2019randomly, kwon2020algorithm, daskalakis2017ten,
xu2016global}. In fact, \cite{wu2019randomly}
show that in the case that the means of both components
approach zero (that is, the data is actually only generated from a standard
normal distribution), the EM has a sub-optimal rate of convergence. We see in
Figure~\ref{fig:alpha-sample}
that $\alpha$ plays a similar role in that as it tends to infinity, the data
from a single component overwhelms the estimator and so the algorithm fails
to identify the two mixture components. The key distinction here with respect to
the Gaussian mixture case is that $\beta^*$ and $\alpha$ simultaneously affect
both the location and shape of the components. We suspect this confounding
effect may play a role in the admissibility range of $\alpha$, that under the
current approach, is not distinguishable.
As a result, it is entirely possible that the identified range for $\alpha$ is not
be optimal, in that, the upper limit could be an artifact of the proof
methodology or the parametrization of the mixture model (i.e., a user may be
interested in placing constraints on $\beta^*$ as opposed to $\alpha$, which
could change the range of admissibility as well as the interpretation of the
result). The question of an optimal interval for $\alpha$ (or alternatively
$\beta^*$) remains open and is an area of interest for future work.

\subsection{Finite-Sample Convergence Analysis}
\label{sec:finite-results}
Now, we turn to the finite-sample EM iterates, which through their high-probability convergence
to the population iterates, we will show approximate the true parameters.
\begin{theorem}[Convergence of finite-sample EM iterates]
  \label{thm:empirical_bound}
  Let $\hat{\beta}_0 = \beta_0 > 0$, that is, let the initial estimate for the
  population and finite-sample EM iterates be equal.
  Further, assume that $n \gtrsim \text{poly}(\alpha; \varepsilon)$.
  Then for all $t\geq0$, with probability $1-\gamma$,
  \begin{align*}
    |\hat{\beta}_{t} -\beta_{t}|
    \lesssim
    \beta^* \Big(\sqrt{\frac{\log(2/ \gamma)}{n}}
    + \frac{\log(2/ \gamma)}{n}\Big).
  \end{align*}
\end{theorem}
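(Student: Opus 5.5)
We compare the finite-sample update~\eqref{eq:beta_hat} with the population update~\eqref{eq:beta_pop} through one map. Write $w_\beta(x) = (1+\alpha e^{(1-\alpha)x/\beta})^{-1}$. Define the population operator $M(\beta) = \alpha\E[X] - (\alpha-1)\E[X w_\beta(X)]$ and its empirical counterpart $M_n(\beta) = \frac{\alpha}{n}\sum_i X_i - \frac{\alpha-1}{n}\sum_i X_i w_\beta(X_i)$. Then $\beta_{t+1}=M(\beta_t)$ and $\hat\beta_{t+1}=M_n(\hat\beta_t)$. The triangle inequality gives
\begin{align*}
|\hat\beta_{t+1}-\beta_{t+1}| \le |M_n(\hat\beta_t)-M(\hat\beta_t)| + |M(\hat\beta_t)-M(\beta_t)|.
\end{align*}
We bound the second term by $(1-\kappa_\alpha)|\hat\beta_t-\beta_t|$, using the contraction argument behind Theorem~\ref{thm:pop_recursion}. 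That theorem is stated with $\beta^*$ as the fixed point. We will need the derivative bound $\sup_\beta |M'(\beta)|\le 1-\kappa_\alpha$, which we expect is what the proof actually establishes, since the bound should come from a mean-value argument on $\beta\mapsto \E[Xw_\beta(X)]$. If the proof only gives contraction toward $\beta^*$, we instead write $|M(\hat\beta_t)-M(\beta_t)|\le |M(\hat\beta_t)-\beta^*|+|M(\beta_t)-\beta^*|$. That version forces us to track $|\hat\beta_t-\beta^*|$, which is harmless because the same recursion closes.

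\emph{Concentration, uniform over $\beta$.} We need $\sup_{\beta\in B}|M_n(\beta)-M(\beta)|\lesssim \beta^*(\sqrt{\log(2/\gamma)/n}+\log(2/\gamma)/n)$ on a compact range $B$. The range $B$ should contain all iterates, e.g.\ $B=[\beta^*/\alpha, \alpha\beta^*]$ up to constants after one step. The reason is that $M_n(\beta)$ is a convex combination of $\frac1n\sum X_i$ and $\frac{\alpha}{n}\sum X_i$, since $w_\beta\in[0,1]$ with weights $\alpha(1-w)+w$. The first piece, $\frac{\alpha}{n}\sum (X_i-\E X_i)$, is a sum of sub-exponential variables with $\psi_1$-norm $\lesssim\beta^*$. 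Bernstein's inequality then yields exactly the rate $\beta^*(\sqrt{\log(2/\gamma)/n}+\log(2/\gamma)/n)$, with $\alpha$-dependence absorbed into constants. The second piece, $\frac1n\sum X_iw_\beta(X_i)-\E[Xw_\beta(X)]$, is controlled pointwise in the same way, since $|X w_\beta(X)|\le X$ is sub-exponential with the same norm. We make it uniform in $\beta$ in three steps:
\begin{itemize}
\item Note that $\beta\mapsto x w_\beta(x)$ is monotone in $\beta$ for $x>0$ (as $\beta$ increases, $e^{(1-\alpha)x/\beta}$ increases and $w_\beta$ decreases).
\item Apply a bracketing/$\epsilon$-net argument on $B$ with a union bound over $N\asymp \text{poly}(\alpha,1/\varepsilon)$ grid points.
\item Fill in between grid points using Lipschitzness of $\E[Xw_\beta(X)]$ in $\beta$ and monotone sandwiching.
\end{itemize}
The union bound introduces $\log N$, which the hypothesis $n\gtrsim\text{poly}(\alpha;\varepsilon)$ absorbs into constants. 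This is also where we use that hypothesis to guarantee the iterates remain in $B$.

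\emph{Closing the recursion.} On the single high-probability event above, write $\delta_n$ for the deviation bound. Then $e_{t+1}\le \delta_n+(1-\kappa_\alpha)e_t$ with $e_0=0$. Unrolling gives $e_t\le \delta_n\sum_{s<t}(1-\kappa_\alpha)^s\le \delta_n/\kappa_\alpha$ uniformly in $t$. Since $\kappa_\alpha$ depends only on $\alpha$, this is the claimed $\lesssim$ bound. The event is fixed once and does not depend on $t$, so the bound holds for all $t\ge0$ simultaneously.

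\emph{Main obstacle.} The delicate step is the uniform deviation bound for $M_n - M$, because the iterates $\hat\beta_t$ depend on the same sample. Pointwise Bernstein therefore does not suffice. We must both confine all iterates to a compact $B$ with high probability and run the net argument with sub-exponential (not bounded) summands. A secondary subtlety is extracting a genuine Lipschitz constant $1-\kappa_\alpha$ for $M$ away from $\beta^*$ from the proof of Theorem~\ref{thm:pop_recursion}.
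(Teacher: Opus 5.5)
\textbf{Main gap: the contraction step.} The step that fails is $|M(\hat\beta_t)-M(\beta_t)|\le(1-\kappa_\alpha)|\hat\beta_t-\beta_t|$.

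The proof of Theorem~\ref{thm:pop_recursion} never bounds $M'(\beta)=\partial_\beta f(\beta,\beta^*)$. It applies the mean value theorem in the \emph{second} (true-parameter) argument, to $f(\beta_t,\beta^*)-f(\beta_t,\beta_t)$. It then lower-bounds $\partial_\mu f\ge\kappa_\alpha$ via Lemma~\ref{lem:partial_deriv} and uses $f(\beta,\beta)=\beta$. This only controls the distance from $M(\beta)$ to the fixed point $\beta^*$.

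Differentiating $f(\beta,\beta)=\beta$ gives $M'(\beta^*)\le 1-\kappa_\alpha$, but only at the single point $\beta=\beta^*$. Elsewhere the bound is not established, and it appears to be false. Indeed $M'(\beta)=\E[\phi(X/\beta)]$ with $\phi(u)=(\alpha-1)^2\alpha u^2e^{(1-\alpha)u}(1+\alpha e^{(1-\alpha)u})^{-2}$. At $\alpha=2$ my rough numerical evaluation gives $M'(\beta^*/4)\approx 0.33$, whereas $1-\kappa_2\approx 0.30$.

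Your fallback does not close the recursion either. It gives $e_{t+1}\le\delta_n+(1-\kappa_\alpha)\bigl(e_t+2|\beta_t-\beta^*|\bigr)$, hence $e_t\le\delta_n/\kappa_\alpha+2t(1-\kappa_\alpha)^t|\beta_0-\beta^*|$. The last term is of order $|\beta_0-\beta^*|$ for small $t$. What that route does give cleanly is $|\hat\beta_t-\beta^*|\le(1-\kappa_\alpha)^t|\beta_0-\beta^*|+\delta_n/\kappa_\alpha$, essentially Corollary~\ref{cor:empirical_rate}. It does not give the theorem's bound on $|\hat\beta_t-\beta_t|$ for all $t\ge 0$.

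To repair the argument you need a genuine Lipschitz constant $L_\alpha=\sup_\beta M'(\beta)$. The easy bound $\phi\le 4\alpha/e^2$, from $u^2e^{-(\alpha-1)u}\le 4/((\alpha-1)^2e^2)$, is below one only for $\alpha<e^2/4$. So in general you must either prove $L_\alpha<1$ by a separate computation, or accept $e_t\le\delta_n\sum_{s<t}L_\alpha^s$, whose constant grows with $t=t(\varepsilon)$.

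\textbf{How the paper proceeds.} The paper takes the latter option, with a different split that also removes your main technical burden. It adds and subtracts $M_n(\beta_t)=\frac1n\sum_i g(X_i,\beta_t)$ instead of $M(\hat\beta_t)$.
\begin{itemize}
\item The concentration term $|M_n(\beta_t)-M(\beta_t)|$ is then evaluated at the \emph{deterministic} population iterate. Pointwise sub-exponential Bernstein suffices, and no uniform deviation bound is needed.
\item The dependence on the sample moves into $|M_n(\hat\beta_t)-M_n(\beta_t)|$. This is controlled by a mean value bound on $\partial_\beta g$ together with concentration of $\frac1n\sum_i X_i^2$.
\item The resulting Lipschitz factor is not assumed to be below one. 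The hypothesis $n\gtrsim\mathrm{poly}(\alpha;\varepsilon)$, with $t$ taken from Corollary~\ref{cor:pop_conv}, is spent on the growth of the geometric sum $\sum_{i<t}L^i$. It is not spent on a union bound or on confining the iterates.
\end{itemize}

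\textbf{Secondary issue: the net argument.} As written, your grid argument would not give the stated rate. With $N\asymp\mathrm{poly}(\alpha,1/\varepsilon)$ grid points, the sandwiching error is of order (Lipschitz constant)$\times$(mesh), which is not $O(\beta^*/\sqrt n)$. Forcing it to be that small requires $N\gtrsim\sqrt n$, and then the union bound contributes a $\log n$ factor that no lower bound on $n$ absorbs. You would need to replace the union bound by a chaining or bracketing-entropy argument.
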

Theorem \ref{thm:empirical_bound} shows that with high probability, the
distance between the finite-sample and population iterates can be controlled. In
particular, we note that the rate at which the distance between the two iterates
goes to zero reflects the standard concentration rates for sub-Exponential
random variables. That is, we see the $n^{-1/2}$ rate for small deviations that
fall into the sub-Gaussian regime and the $n^{-1}$ rate for the larger
deviations that fall into the sub-Exponential regime. Thus, a mixture model of
sub-Exponential distributions maintains a sub-Exponential rate of convergence
and so we conjecture that this rate would be minimax optimal up to some
constants that may depend on the parameters of the mixture model.

It is also relevant to note that Theorem~\ref{thm:empirical_bound} includes a
constraint on $n$ in terms of $\alpha$ and $\varepsilon$ (through $t$, as
established in Corollary~\ref{cor:pop_conv}) that is distinct from the
constraint on $\alpha$ identified in Corollary~\ref{cor:pop_conv}. This
constraint is purely an artifact of the recursion formulation for convergence
and does not impose a restriction on $\alpha$ in the way
Theorem~\ref{thm:pop_recursion} does. In essence, one should treat this as a minimum
number of samples needed to achieve convergence of the finite-sample EM iterates
for a given $\alpha$ and error tolerance $\varepsilon$ rather than a limit on
the true value of $\alpha$. We refer the reader to the proof of the theorem in
Appendix~\ref{app:main_proofs} for the exact lower bound on $n$ in terms of
$\alpha$ and $t$. From the simulation study in Section~\ref{sec:sims}, it
appears that the lower bound on $n$ is not necessarily optimal and there may be
room for improvement. Further discussion regarding this is deferred to
Section~\ref{sec:sims}.

We can now bring together the results of Theorem~\ref{thm:pop_recursion}
and~\ref{thm:empirical_bound} to show convergence of the sample EM
iterates to the true parameter.
\begin{cor}\label{cor:empirical_rate}
  Suppose all assumptions of Theorem~\ref{thm:empirical_bound} hold.
  Furthermore, let
  $t \geq \log(\frac{|\beta_0-\beta^*|}{\varepsilon})/\log (\frac{1}{1-\kappa_{\alpha}})$.
  Then, with probability $1-\gamma$
  \begin{align*}
    |\hat{\beta}_{t} - \beta^*|
    \lesssim
    \beta^*
    \Big(\sqrt{\frac{\log(2/ \gamma)}{n}}
    + \frac{\log(2/ \gamma)}{n}\Big)
    +\varepsilon.
  \end{align*}
\end{cor}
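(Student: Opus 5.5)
The plan is to combine the two earlier results with the triangle inequality,
\begin{align*}
  |\hat{\beta}_t - \beta^*| \leq |\hat{\beta}_t - \beta_t| + |\beta_t - \beta^*|.
\end{align*}
We then bound the two terms separately: the first by Theorem~\ref{thm:empirical_bound}, the second by Corollary~\ref{cor:pop_conv}. Since the population iterates $\beta_t$ are deterministic, the only randomness sits in the first term. Consequently the high-probability event does not need a union bound with anything else.

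\textbf{The population term.} The hypotheses of Theorem~\ref{thm:empirical_bound} include those of Theorem~\ref{thm:pop_recursion}: Assumption~\ref{as:dgp} holds and $\beta_0 > 0$. We also implicitly need $\alpha \in (1,\alpha_{\max})$, so that $\kappa_\alpha \in (0,1)$ and the quantity $\log(1/(1-\kappa_\alpha))$ in the iteration count is positive and finite. Under this condition, iterating Theorem~\ref{thm:pop_recursion} gives
\begin{align*}
  |\beta_t - \beta^*| \leq (1-\kappa_\alpha)^t |\beta_0 - \beta^*|.
\end{align*}
The stated lower bound on $t$ is exactly the one in Corollary~\ref{cor:pop_conv}, so $(1-\kappa_\alpha)^t|\beta_0-\beta^*| \leq \varepsilon$. (If $\beta_0=\beta^*$, this term is zero and there is nothing to prove.)

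\textbf{The sample term.} With probability $1-\gamma$, Theorem~\ref{thm:empirical_bound} gives the stated sub-Exponential bound on $|\hat{\beta}_t-\beta_t|$, since $\hat\beta_0=\beta_0$. Adding the two bounds yields the claim.

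\textbf{Main obstacle.} The delicate point is that the conclusion of Theorem~\ref{thm:empirical_bound} is applied at a specific $t$ that grows like $\log(|\beta_0-\beta^*|/\varepsilon)$. Its sample-size condition $n\gtrsim \mathrm{poly}(\alpha;\varepsilon)$ is stated with $\varepsilon$ entering through exactly this $t$. I would therefore check that the $t$ chosen here is the one for which the condition on $n$ is assumed. Concretely, I would take $t$ equal to the ceiling of the stated threshold and read the hypothesis $n \gtrsim \mathrm{poly}(\alpha;\varepsilon)$ with this same $\varepsilon$, so that the bound from Theorem~\ref{thm:empirical_bound} holds at this $t$ with the stated $\gamma$. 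For larger $t$, I would rely on the theorem's uniform-in-$t$ statement ``for all $t\ge0$'' as given.
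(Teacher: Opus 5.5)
Your proposal is correct and follows the paper's proof: split $|\hat{\beta}_t-\beta^*|$ by the triangle inequality, bound $|\hat{\beta}_t-\beta_t|$ with Theorem~\ref{thm:empirical_bound} and $|\beta_t-\beta^*|$ with Corollary~\ref{cor:pop_conv}. Your extra remarks are sensible points that the paper leaves implicit: the need for $\alpha\in(1,\alpha_{\max})$ so that $\kappa_\alpha\in(0,1)$, and reading the sample-size condition at the chosen $t$.
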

The result of Corollary~\ref{cor:empirical_rate} follows directly from
Theorems~\ref{thm:pop_recursion} and~\ref{thm:empirical_bound}. The rate of
convergence, as highlighted earlier, is inline with the rates for
sub-Exponential random variables.
We note that the number of iterations required for convergence in
Corollary~\ref{cor:empirical_rate} scales at least as $\log n$, in order for the
approximation error to be of higher-order than the estimation error, which
is comparable to the number of iterations required (in the best case) for Gaussian
mixtures as well \cite{wu2019randomly, xu2016global}. We additionally highlight that
the error rate has an inverse dependency on the magnitude of the true parameter
$\beta^*$ than typically found in Gaussian mixture (for the mean parameter)
analysis, however, we believe this is largely attributed to the differing roles
that the parameter of interest plays in the Gaussian density versus Exponential
densities, as discussed in Section~\ref{sec:pop-results} in connection with
Corollary~\ref{cor:pop_conv}.
Corollary~\ref{cor:empirical_rate} shows that the EM algorithm can adapt to
non-Gaussian mixtures well without sacrificing the rate of convergence or the
number of iterations required to achieve a specific error tolerance. This opens
many avenues for generalizing the applicability of EM to various classification
and regression problems.

\section{Simulations}\label{sec:sims}

In order to corroborate our theoretical results, we simulate 2-component
mixtures of exponentials following the model~\eqref{eq:mix_density} and
algorithm (Alg~\ref{alg:EM}) detailed in Section \ref{sec:results} for a range
of values of $\alpha$ and $\beta^*$.
For each experiment, we randomly initialize $\hat{\beta}_0 \in (0,10)$ and then
run the algorithm using the closed-form expression in
Proposition~\ref{prop:closed_form} for the number of steps necessary to achieve
an error of $\varepsilon \leq 10^{-2}$, calculated according to
Corollary~\ref{cor:empirical_rate}.
All replication files (written in Python) for the simulations presented here can
be found on Github at
\url{https://github.com/kdullerud/EM-convergence-exponential}.

\begin{figure}[h!]
  \centering
  \includegraphics[width=0.8\textwidth]{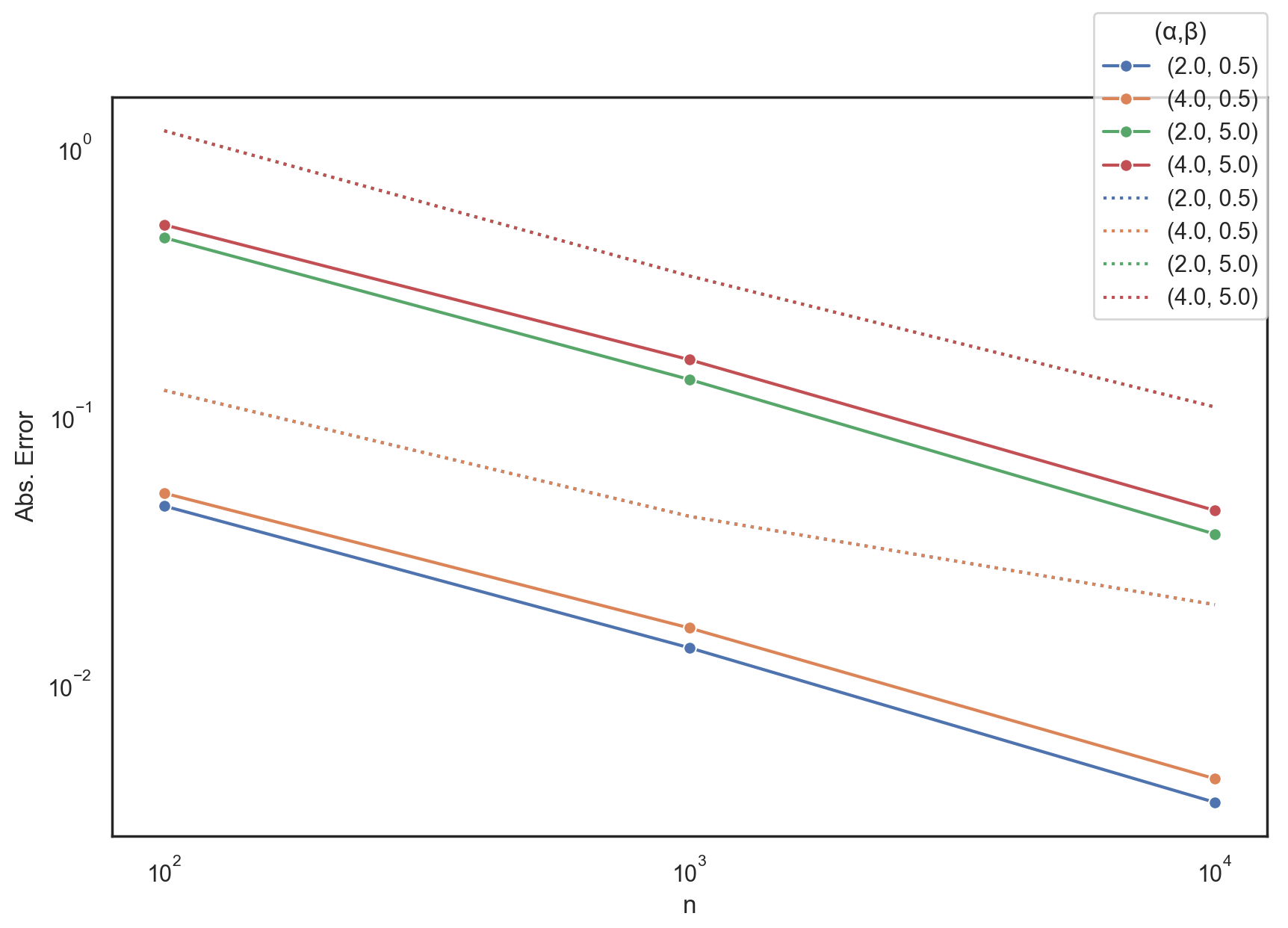}
  \caption{Plot of the absolute statistical error $|\hat{\beta}_T-\beta^*|$
    versus sample size $n$, where $\hat{\beta}_T$ is the converged iterate.
    The solid lines show the empirical absolute statistical error, averaged over
    $50$ independent runs. The dotted lines show the corresponding theoretical
    bound found in Corollary \ref{cor:empirical_rate}.}
  \label{fig-sample}
\end{figure}
Figure \ref{fig-sample} shows the absolute error of the converged
iterate of the algorithm versus the sample size $n$. We see that the simulated
results, shown in solid lines fall below the bound found in Theorem
\ref{thm:empirical_bound}, shown in dotted lines, but follow the same general
trend as a function of $n$. We first note that our bound seems to be relatively
tight as a function of the parameters $\alpha$ and $\beta^*$ and that it
correctly identifies the influence of $\beta^*$ on the absolute error.
We highlight here that $\alpha$ seems to play little role in the rate,
which corroborates the findings of Theorem \ref{thm:empirical_bound}.

It is important here to note the impact of the assumption in
Theorem~\ref{thm:empirical_bound} and Corollary~\ref{cor:empirical_rate} that requires
$\sqrt{n} \gtrsim poly(\alpha; \varepsilon)$. We can see directly from
Figure~\ref{fig-sample} that the bound is not optimal. As an example, consider
the case where $\alpha = 4$, and an error threshold of $\varepsilon=0.01$.
According to Theorem~\ref{thm:empirical_bound}, we would require sample size
$n \gtrsim 10^{14}$ which is both computationally expensive to work with and can
be unattainable in many datasets. Moreover, Figure~\ref{fig-sample} clearly
shows that the algorithm is able to converge with significantly smaller sample
sizes. Thus, despite not taking a large enough $n$ according to the theory,
the bound of Theorem~\ref{thm:empirical_bound} shows relatively good control on
the error in Figure~\ref{fig-sample}, giving us reason to believe that there is
room to improve in the minimum sample size requirement for the conclusions of
Theorem~\ref{thm:empirical_bound} and Corollary~\ref{cor:empirical_rate} to be
vaild.
\begin{figure*}[t!]
\centering
\begin{subfigure}[t]{0.48\textwidth}
    \centering
    \includegraphics[height=1.65in]{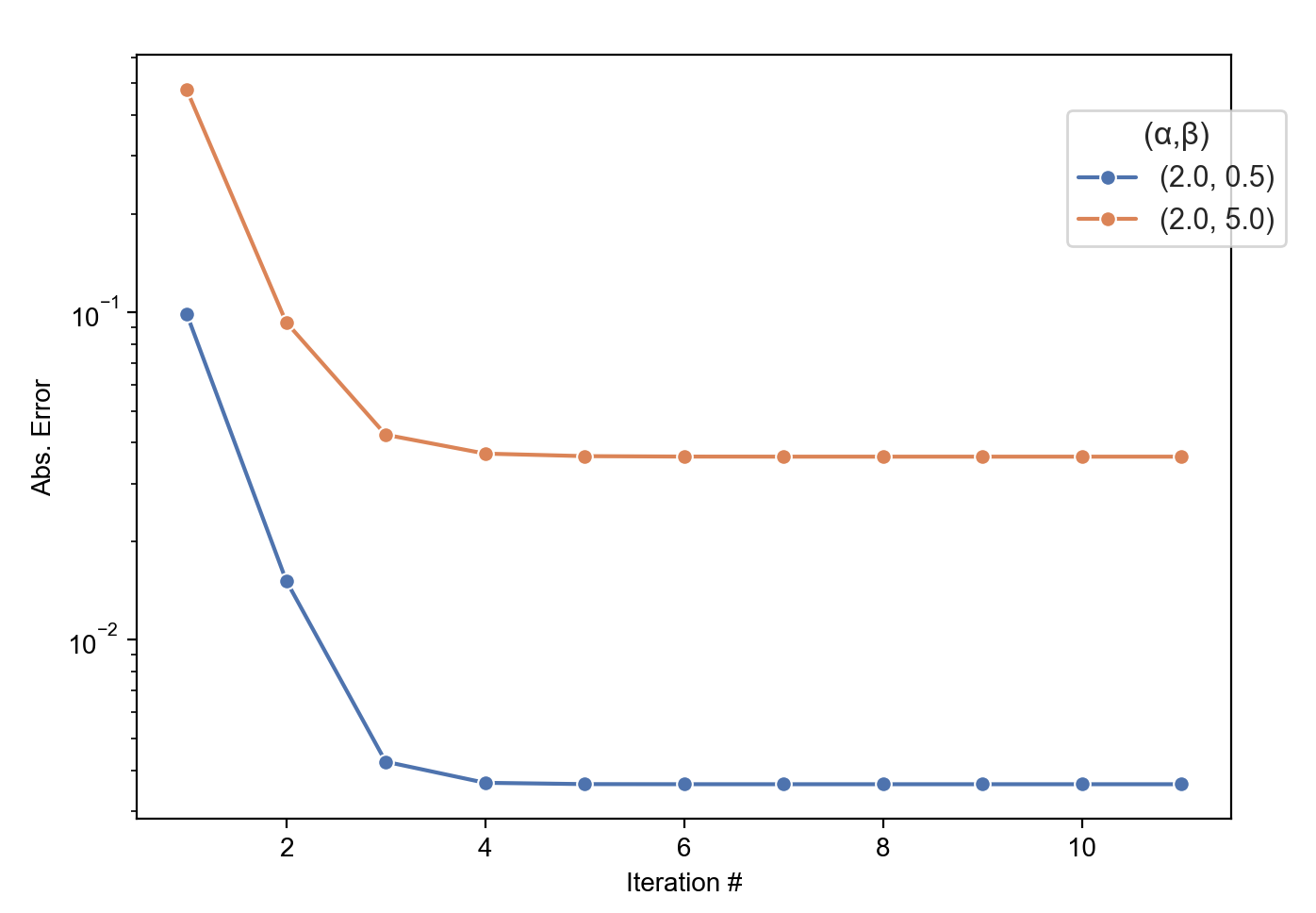}
\end{subfigure}
\begin{subfigure}[t]{0.48\textwidth}
    \centering
    \includegraphics[height=1.65in]{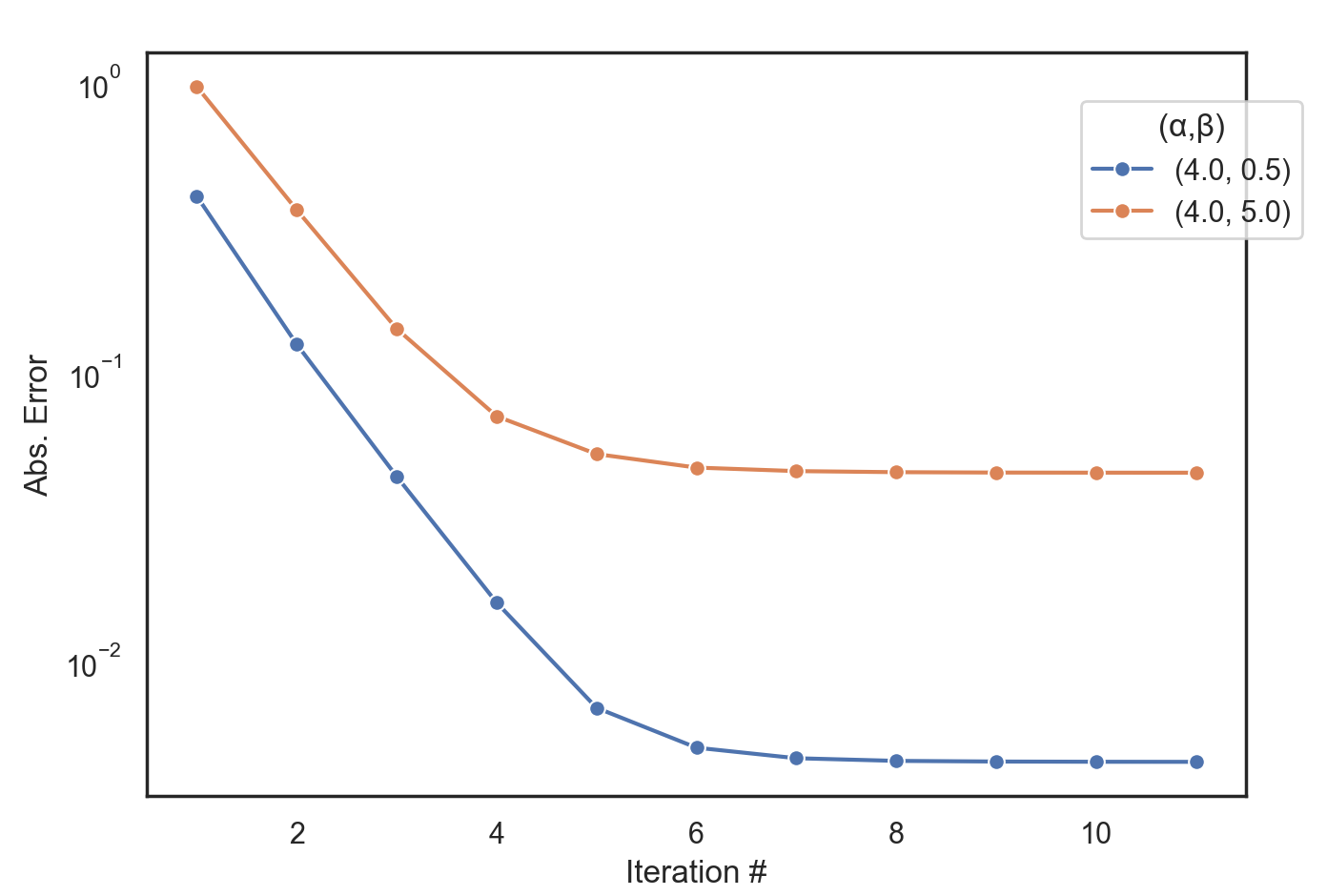}
\end{subfigure}
    \caption{Plot of iteration number versus statistical error
      $|\hat{\beta}_t-\beta^*|$ at each iteration $t$ for sample size $n=10^4$
      and total number of iterations $T=11$. Simulations were averaged over $50$
      independent runs. Plot for simulations with $\alpha = 2$ on left, $\alpha =4$ on right.}
    \label{fig-iters}
\end{figure*}

We further investigate our results on the number of iterations for convergence,
shown in Figure~\ref{fig-iters}. Our results guarantee that for $t \geq
\log(\frac{|\beta_0-\beta^*|}{\varepsilon})/\log (\frac{1}{1-\kappa_{\alpha}})$,
or after plugging in the corresponding values for $\alpha =2$, $\alpha = 4$ in
our simulations, we achieve the error bound found in
Corollary~\ref{cor:empirical_rate}
for $t \geq 6$, $t \geq 11$ respectively. In inspecting Figure~\ref{fig-iters},
we see that there is indeed not much room for improvement here. Though all three
parameter schemes reach their proven error before the required number of
iterations, our estimation of the total number of iterations needed is based on
the worst-case value of $|\beta_0-\beta^*|$. Thus, our result on the minimum
number of iterations needed to achieve convergence within the error tolerance,
as shown in Corollary~\ref{cor:empirical_rate}, should always upper bound the
empirical minimum number of iterations needed.

In addition to corroborating the theoretical results shown in this paper, we
simulate an unbalanced 2-component mixture of exponentials in order to
understand the generalizing potential of the current theory and to motivate
future work.
We run each experiment in the same fashion as the balanced mixture model, with
mixing proportions replaced by $0.3$ and $0.7$ for components with mean
$\beta^*$ and $\beta^*/\alpha$, respectively. We assume these mixing proportions
to be known.
Figure \ref{fig-unbalanced} shows the absolute statistical error which behaves
very similarly to the error in Figure \ref{fig-sample}.
Given the similarity in results, perhaps with even faster error decay in the
unbalanced case, empirically, there is reason to believe the results of
Section~\ref{sec:results} are applicable to more general settings.
\begin{figure}[h!]
  \centering
  \includegraphics[width=0.75\textwidth]{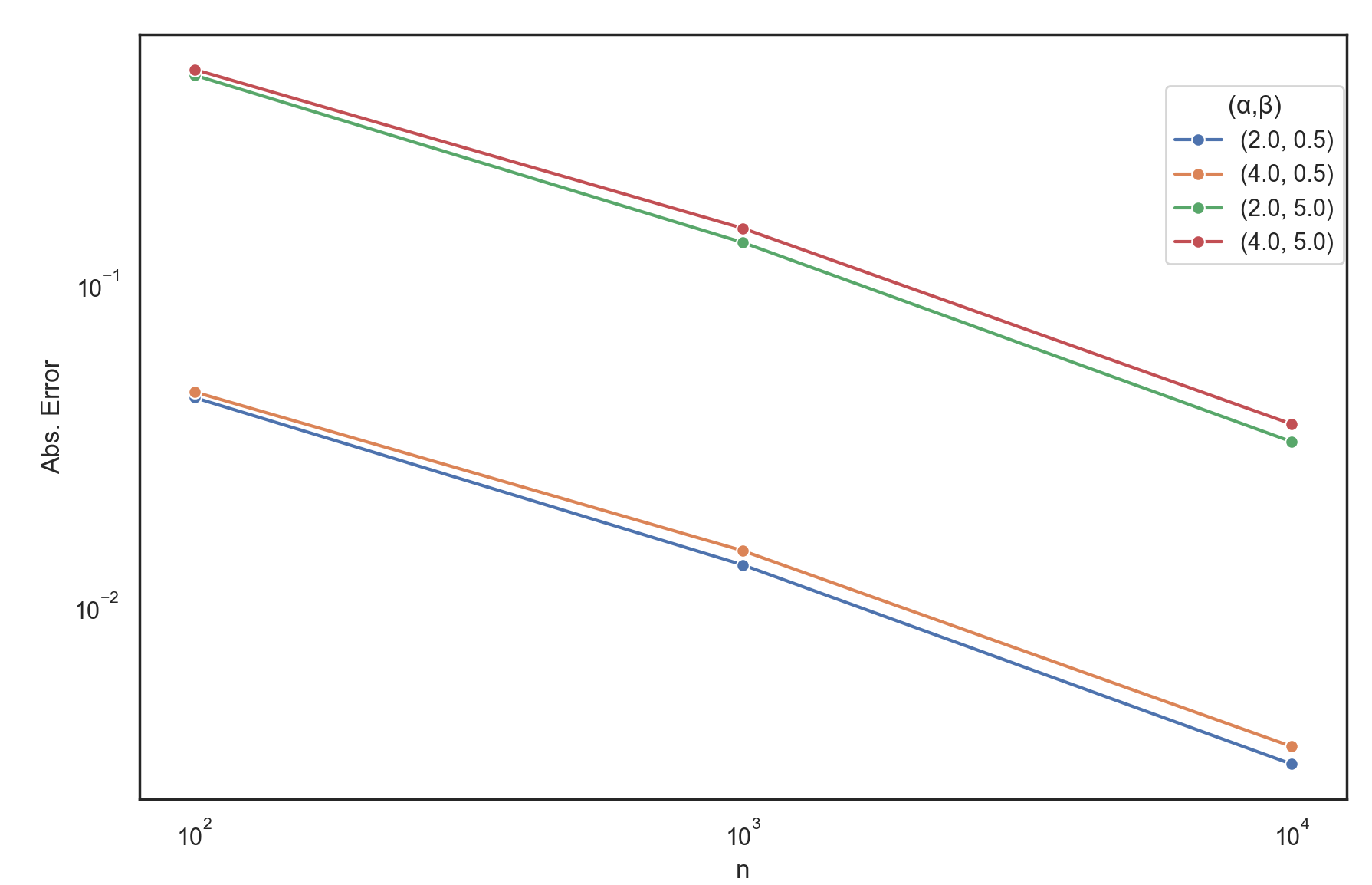}
  \caption{Plot of the absolute statistical error $|\hat{\beta}_T-\beta^*|$
    versus sample size $n$ in the unbalanced mixture case (mixing proportions
    $(0.3, 0.7)$), where $T$ is the converged iterate.
    The solid lines show the empirical absolute statistical error, averaged
    over $50$ independent runs.}
  \label{fig-unbalanced}
\end{figure}
Investigating these
generalizations can be crucial in building towards a unified theory of
convergence of the EM algorithm for mixture models of exponential families.

\section{Conclusion \& Future Work}\label{sec:conclusion}
The results presented in this paper provide a foundation for the investigation
of generalized mixture models from a theoretical perspective that could
broaden the application of EM. We have shown universal asymptotic consistency of
EM estimates for a balanced 2-component mixture of Exponentials under random
initialization. Furthermore, we provide rates of convergence for the empirical
algorithm that require the number of iterations to grow at most logarithmically
in the number of data points. There remain a number of interesting avenues for
future work, as few of which are outlined below.
\begin{itemize}
\item \textbf{High-dimensional mixture models}: It has recently been a point of
  interest in statistics to study high-dimensional models. We expect that
  the results of this paper will generalize directly for high-dimensional
  Exponential mixtures, provided there is independence across dimensions.
  However, ultra high-dimensional settings (like when $d \propto n$) may require
  a completely different perspective of analysis~\cite{candes2020phase}.

\item \textbf{$K>2$ components}: Beyond exploring the unbalanced two-component
  case, as mentioned earlier, a further natural extension would be to
  consider more that two components. Some existing work along these lines for
  Gaussian mixtures~\cite{jin2016local} can be informative on what may be achievable.

\item \textbf{Federated learning}: Recent statistical advances have extended the
  study of mixture models into the federated learning framework, which allows
  for privacy considerations while leveraging statistical power of averaging
  larger populations~\cite{tao2024convergence, reisizadeh2024mixture,
    wu2023personalized}. Extensions of mixtures of Exponentials to federated
  learning can have implications for financial applications where often the
  tail decay is non-Gaussian in nature.
\item \textbf{Limiting assumptions}: This paper also highlights the importance
  of the separation of mixture components in proving global convergence results,
  which has been discussed in the literature previously as
  well~\cite{daskalakis2017ten, wu2019randomly}, albeit from the perspective of
  minimum separation. The present work provides a different perspective on such
  separation conditions that apply for non-symmetric distributions. Further
  investigation into optimality of such assumptions and minimax bounds on more
  general classes of mixture models would be paramount to identifying
  applicability of iterative algorithms, including EM, to a general class of
  functions.
\item \textbf{Unified theory for exponential families}:
  The question of developing a more unified framework on the global convergence
  conditions of the EM algorithm for mixtures of exponential family densities
  remains of great interest and has been largely unexplored in the past forty
  years~\cite{wu1983convergence}.
\end{itemize}

\begin{credits}
\subsubsection{\ackname}
The authors would like to thank Paul Freulon for insightful comments on an
earlier draft of this paper.

\subsubsection{\discintname}
The authors have no competing interests to declare that are
relevant to the content of this article.
\end{credits}

\bibliographystyle{splncs04}
\bibliography{ref}

\begin{subappendices}
  \renewcommand{\thesection}{\Alph{section}}%
  \section{Proofs of main results}
  \label{app:main_proofs}
  Here we provide detailed proofs of each of the results in Section~\ref{sec:results}.
  \subsection*{Proof of Proposition~\ref{prop:closed_form}}
We begin from the complete log-likelihood of the pairs $(X_i,Z_i)_{i=1}^n$,
ignoring any constants that do not affect the maximization step:
\begin{align*}
\ell_{\beta}(X,Z)
  = -n\log \beta +
  \sum_{i=1}^n \left[ \frac{Z_i(\alpha-1) X_i}{\beta} - \frac{\alpha X_i}{\beta}\right] .
\end{align*}
Then taking the expectations with respect to the estimated distribution for each
$Z_i$ conditional on $\hat{\beta}_t$, the current parameter estimate,
\begin{align*}
  Q(\beta|\hat{\beta}_t)
  &= -n\log\beta +
    \sum_{i=1}^n \sum_{z_i=0}^1 \mathbb{P}(Z_i=z_i|X_i, \hat{\beta}_t)\left(
\frac{z_i(\alpha-1) X_i}{\beta} - \frac{\alpha X_i}{\beta}\right) \\
  &= -n\log\beta - \frac{\alpha}{\beta}\sum_{i=1}^n X_i + \frac{1}{\beta}\sum_{i=1}^n \mathbb{P}(Z_i = 1|X_i, \hat{\beta}_t)(\alpha-1) X_i.
\end{align*}
Now, we can approximate $\mathbb{P}(Z_i = 1|X_i, \hat{\beta}_t)$ in terms of the
data and $\hat{\beta}_t$ by using Bayes' formula:
\begin{align*}
    \mathbb{P}(Z_i=1|X_i, \hat{\beta}_t)
  &= \frac{\mathbb{P}(X_i|Z_i, \hat{\beta}_t)\mathbb{P}(Z_i|\hat{\beta}_t)}
  {\mathbb{P}(X_i|\hat{\beta}_t)}
  \\
  &
  = \frac{\frac{e^{-X_i/\hat{\beta}_t}}{2\hat{\beta}_t}}
  {\frac{e^{-X_i/\hat{\beta}_t}}{2\hat{\beta}_t} + \frac{\alpha e^{-X_i \alpha / \hat{\beta}_t}}
  {2\hat{\beta}_t}}
  = (1+ \alpha e^{(1-\alpha) X_i /\hat{\beta}_t})^{-1}
\end{align*}
Thus, plugging this back into the $Q-$function, we have that
\begin{align*}
Q(\beta|\hat{\beta}_t)
  = -n\log\beta
  - \frac{\alpha}{\beta} \sum_{i=1}^n X_i
  + \frac{(\alpha - 1)}{\beta}
  \sum_{i=1}^n X_i(1+ \alpha e^{(1-\alpha) X_i /\hat{\beta}_t})^{-1}.
\end{align*}
Now, we are left only to solve the maximization step, that is, maximize
$Q(\cdot|\cdot)$ with respect to $\beta$.
\begin{align*}
    \frac{dQ}{d\beta}
    = -\frac{n}{\beta}
    + \frac{\alpha}{\beta^2} \sum_{i=1}^n X_i
    -\frac{(\alpha - 1)}{\beta^2}
    \sum_{i=1}^n X_i(1+ \alpha e^{(1-\alpha)X_i / \hat{\beta}_t})^{-1}
\end{align*}
and thus solving at $\frac{dQ}{d\beta} = 0$ yields
\begin{align*}
    \hat{\beta}_{t+1}
  = \frac{\alpha}{n} \sum_{i=1}^n X_i
  - \frac{(\alpha - 1)}{n}
  \sum_{i=1}^n X_i (1+ \alpha e^{(1-\alpha)X_i / \hat{\beta}_t})^{-1} .
\end{align*}
From here, it is straightforward to take the limit as $n \rightarrow
\infty$ and obtain the desired result for the population EM case. \qed

\subsection*{Proof of Theorem~\ref{thm:pop_recursion}}
We note that this proof is similar to that of \cite[Theorem 1]{daskalakis2017ten}
in that we also proceed by using the Mean Value Theorem.
The application, however, of such an approach to exponential distributions and
the control of the derivative is novel.
For simplicity, let $f(\cdot, \cdot)$ denote the function defined by each step under the population EM, such that
\begin{align*}
\beta_{t+1}
  = f(\beta_t, \beta^*)
  = \alpha \mathbb{E}_{\beta^*}[X]
  - (\alpha - 1)\mathbb{E}_{\beta^*}[X \cdot (1+\alpha e^{(1-\alpha)X / \beta_t})^{-1}],
\end{align*}
where by an abuse of notation, we let $\mathbb{E}_{\beta^*}[\cdot]$ refer to the
expectation under the mixture as defined in~\eqref{eq:mix_density}. Some
relevant properties of $f$ are captured in Lemma~\ref{lem:f_properties}.

Let us consider the case where $\beta_t < \beta^*$.
By the fact that $f$ is continuous with respect to its second argument and the
Mean Value Theorem, we know that there must exist a
$\tilde{\beta} \in [\beta_t, \beta^*]$
such that
\begin{equation*}
  \frac{f(\beta_t, \beta^*) - f(\beta_t, \beta_t)}{\beta^*-\beta_t}
  = \frac{\partial f(\beta_t,\mu)}{\partial \mu} \Bigg|_{\mu = \tilde{\beta}}
\end{equation*}
Recall that by definition $f(\beta_t, \beta^*) = \beta_{t+1}$
and by Lemma~\ref{lem:f_properties}, $f(\beta_t, \beta_t) = \beta_t$.
Thus,
\begin{align*}
  & \beta_{t+1} - \beta_t
    = (\beta^*-\beta_t) \left( \frac{\partial f(\beta_t,\mu)}{\partial \mu} \Bigg|_{\mu = \tilde{\beta}}\right)
    \geq (\beta^* - \beta_t) \left( \min_{\mu \in [\beta_t, \beta^*]} \frac{\partial f(\beta_t,\mu)}{\partial \mu} \right)
  \\
  & \iff \beta_{t+1}-\beta^* \geq (\beta_t - \beta^*) \left( 1- \min_{\mu \in [\beta_t, \beta^*]} \frac{\partial f(\beta_t,\mu)}{\partial \mu} \right).
\end{align*}
Invoking Lemma \ref{lem:f_properties} once again,
we have that since $f(\beta, \mu)$ is increasing with respect to $\beta$ for any
given $\mu$,
\begin{align*}
  \beta_{t+1} = f(\beta_t, \beta^*) \leq f(\beta^*, \beta^*) = \beta^*,
\end{align*}
by the assumption that $\beta_t < \beta^*$.
Thus it is true that $\beta_{t+1} \leq \beta^*$ and therefore:
\begin{align*}
    |\beta_{t+1}-\beta^*|
  = \beta^*-\beta_{t+1}
  \leq  |\beta_t-\beta^*| \left( 1- \min_{\mu \in [\beta_t, \beta^*]} \frac{\partial f(\beta_t,\mu)}{\partial \mu} \right).
\end{align*}
It remains to identify the range of the partial derivative for which a strict
contraction is obtained, which, by Lemma~\ref{lem:partial_deriv}, is lower bounded as
\begin{align*}
  \min_{\mu \in [\beta_t, \beta^*]} \frac{\partial f(\beta_t,\mu)}{\partial \mu}
  \geq
    \frac{\alpha+1}{2} - \frac{\alpha-1}{2}\left(\frac{3}{e} + \frac{1}{\alpha} \right).
\end{align*}
To see the case where $\beta_t > \beta^*$, we simply flip the order of the terms
in the use of the MVT (in the given interval), and follow the same steps as above
to arrive at the final expression.
\qed
\vspace{0.2cm}
\noindent
\textbf{Note:} In connection with the discussion regarding the range of
convergence with respect to $\alpha$ from Section~\ref{sec:pop-results},
observe that the second term of $f$ is approximately $(\alpha -
1)\E_{\beta^*}[X]$ for large alpha.
This implies that
\begin{align*}
  f(\beta_t, \beta^*)
  \approx
  \frac{1}{2}(\beta^* + \frac{\beta^*}{\alpha})
\end{align*}
for $\alpha$ sufficiently large. This eliminates the effect of the weights on the
iteration of the estimates for $\beta$ and leads to a formulation that does not
guarantee convergence.

\subsection*{Proof of Corollary~\ref{cor:pop_conv}}
By recursion of the statement of Theorem~\ref{thm:pop_recursion},
\begin{align*}
  |\beta_t-\beta^*|
  \leq (1-\kappa_\alpha)^t |\beta_0-\beta^*|.
\end{align*}
To ensure that this bound is at most $\varepsilon$,
we must satisfy
\begin{align*}
  (1-\kappa_\alpha)^T |\beta_0-\beta^*| \leq \varepsilon
\end{align*}
for some $T>0$.
Thus, for
\begin{align*}
  T \geq \log \left(
  \frac{|\beta_0-\beta^*|}{\varepsilon}\right) / \log \left(
  \frac{1}{1-\kappa_\alpha}\right)
\end{align*}
the conclusion follows.
\qed

\subsection*{Proof of Theorem~\ref{thm:empirical_bound}}
  We complete the proof in four steps.
  \subsubsection{1. Iterative structure:}
    We start by defining
          \begin{align*}
            g(X,\beta)
            = \alpha X - (\alpha-1)X(1+\alpha e^{(1-\alpha)X/\beta})^{-1},
          \end{align*}
          where $X \sim P_{\beta^*}$ as defined in~\eqref{eq:mix_dens}.
          Rewriting our iterates in terms of $g$, we have
          \begin{align*}
            \hat{\beta}_{t+1}
            &= \frac{1}{n}\sum_{i=1}^n g(X_i,\hat{\beta}_t)\qquad \text{and},
            \\
            \beta_{t+1}
            &= \mathbb{E}[g(X,\beta_t)].
          \end{align*}
          We will use this formulation to show concentration of the sample
          iterates around the population iterates in absolute value,
          \begin{align*}
            |\hat{\beta}_{t+1} - \beta_{t+1}|
            =
            \left|\frac{1}{n}\sum_{i=1}^n g(X_i,\hat{\beta}_t) - \mathbb{E}[g(X,\beta_t)]\right|,
          \end{align*}
          for any $t$.
          We start by adding and subtracting
          $\frac{1}{n}\sum_{i=1}^ng(X_i, \beta_t)$ inside the absolute value to
          get
        \begin{align}
          \label{eq:g_bound}
          &\left|\frac{1}{n}\sum_{i=1}^n g(X_i,\hat{\beta}_t) - \mathbb{E}[g(X,\beta_t)]\right|
            \nonumber
          \\
          &
           = \left|\frac{1}{n}\sum_{i=1}^n g(X_i,\hat{\beta}_t)
            -\frac{1}{n}\sum_{i=1}^ng(X_i, \beta_t)
            + \frac{1}{n}\sum_{i=1}^ng(X_i, \beta_t)
            - \mathbb{E}[g(X,\beta_t)]\right|
            \nonumber
          \\ &
               \leq   \left|\frac{1}{n}\sum_{i=1}^n g(X_i,\hat{\beta}_t)
               -\frac{1}{n}\sum_{i=1}^ng(X_i, \beta_t)\right|
               + \left| \frac{1}{n}\sum_{i=1}^ng(X_i, \beta_t)- \mathbb{E}[g(X,\beta_t)]\right|
               \nonumber
          \\ &
               \leq \underbrace{\frac{1}{n}\sum_{i=1}^n\left|g(X_i,\hat{\beta}_t)
               -  g(X_i,\beta_t)\right|}_{(I)}
               + \underbrace{\left| \frac{1}{n}\sum_{i=1}^n g(X_i, \beta_t)- \mathbb{E}[g(X,\beta_t)]\right|}_{(II)},
        \end{align}
        where the first inequality follows from the triangle inequality.
        We investigate the terms $(I)$ and $(II)$ of~\eqref{eq:g_bound} separately, starting with $(I)$.
\subsubsection{2. Concentration of $(I)$:}
        We assume, without loss of generality, that $\hat{\beta}_t \leq \beta_t$.
        The argument for $\hat{\beta}_t \geq \beta_t$ follows by reversing
        the same argument given below.
        By the Mean Value Theorem and continuity of $g$ with respect to
        its second argument, we have
        \begin{align}
          \label{eq:g_conc}
          &\frac{|g(X,\hat{\beta}_t) - g(X,\beta_t)|}{|\hat{\beta}_t-\beta_t|}
            = \left| \frac{\partial g(X,\beta)}{\partial \beta}
            \Bigg|_{\tilde{\beta} \in [\hat{\beta}_t, \beta_t]}
            \right|
              \leq \max_{\beta \in [\hat{\beta}_t, \beta_t]}
              \left| \frac{\partial g(X,\beta)}{\partial \beta} \right|
              \nonumber
          \\ & \iff
               |g(X, \hat{\beta}_t) - g(X, \beta_t)|
               \leq |\hat{\beta}_t - \beta_t |
               \max_{\beta \in [\hat{\beta}_t, \beta_t]}
               \left| \frac{\partial g(X,\beta)}{\partial \beta} \right|,
        \end{align}
        where
        \begin{align}
          \label{eq:partial_g}
        \frac{\partial g}{\partial \beta}
          = \frac{(\alpha-1)^2 \alpha X^2 e ^{(1-\alpha)X/\beta}}
          {(1+\alpha e ^{(1-\alpha)X / \beta})^2 \beta^2}.
        \end{align}
        Under the constraints that $\beta >0, X > 0$ and $\alpha >1$, we observe
        that the partial derivative~\eqref{eq:partial_g} is always positive. As a
        result, we can drop the absolute value around the partial derivative
        in~\eqref{eq:g_conc} and directly maximize the derivative.
        Now, note that
        \begin{align*}
          (1+\alpha e^{(1-\alpha)X/\beta})^{-1} \in [(1+\alpha)^{-1}, 1],
        \end{align*}
        and that $e^{(1-\alpha)X/\beta} \leq 1$ for $X>0$ since we assume
        $\alpha > 1$.
        Putting this all together gives an upper bound on the maximum of the derivative,
        \begin{align*}
          \max_{\beta \in [\hat{\beta}_t, \beta_t]} \frac{\partial g}{\partial \beta}
          \leq
          \frac{(\alpha - 1)^2\alpha X^2}{[\min\{\hat{\beta}_t, \beta_t\}]^2}.
        \end{align*}
        The $\min\{\hat{\beta}_t, \beta_t\}$ allows for consideration of the
        case $\hat{\beta}_t \geq \beta_t$, and therefore the result holds for
        any $\hat{\beta}_t$. The following analysis is for any $\hat{\beta}_t$.
        By Lemma~\ref{lem:moment_concentration} and using the fact that
        \begin{align*}
          \E[X^2]
          &= \beta^{*2} + \Big(\frac{\beta^*}{\alpha}\Big)^2,
        \end{align*}
        we have that with probability $1-e^{-\eta}$,
        \begin{align*}
          (I)
          &
          \leq
            \frac{(\alpha - 1)^2\alpha|\hat{\beta}_t - \beta_t|}
            {[\min\{\hat{\beta}_t, \beta_t\}]^2}
            \frac{1}{n}\sum_{i=1}^n X_i^2
          \\
          &
            \leq
            \frac{(\alpha - 1)^2\alpha|\hat{\beta}_t - \beta_t|}
            {[\min\{\hat{\beta}_t, \beta_t\}]^2}
            \Big(
            \beta^{*2} + \Big(\frac{\beta^*}{\alpha}\Big)^2
            +
            4C_{\theta}\beta^{*2}\max\{1, \alpha^{-2}\} \sqrt{\frac{\eta}{n}}
            \Big).
        \end{align*}

        \subsubsection{3.  Concentration of $(II)$:}
For term (II) of~\eqref{eq:g_bound}, we invoke sub-Exponential concentration
results which involve computing the sub-Exponential norm of a random variable,
as defined in \cite[Definition 2.8.4]{vershynin2020high}.
The first technical result we need to proceed is the fact that a mixture of
exponentially distributed random variables is sub-Exponential. We capture this,
along with the corresponding sub-Exponential norm in
Lemmas~\ref{lem:subexp-norm} and~\ref{lem:sub_exp-mix}.
Furthermore, it follows from Corollary~\ref{cor:subexp-bond} that $g(X,\beta)$ is a
sub-Exponential random variable with sub-Exponential norm less than or equal to
that of $X$. Thus, by \cite[Theorem
2.9.1]{vershynin2020high}, \cite[Lemma 2.7.8]{vershynin2020high} and
Corollary~\ref{cor:subexp-bond}, we have:
\begin{align}
  \label{eq:prob_rate}
  & \mathbb{P}\left( \left| \frac{1}{n} \sum_{i=1}^n g(X_i, \beta_t)- \mathbb{E}[g(X,\beta_t)] \right|  > \delta \right)
    \nonumber
  \\
  &\leq
    2 \exp \left\{-c \min \left[\frac{n \delta^2}
    {\|g(X_i, \beta_t)- \mathbb{E}[g(X,\beta_t)]\|_{\psi_1}^2},
    \frac{n\delta}{\|g(X_i, \beta_t)- \mathbb{E}[g(X,\beta_t)]\|_{\psi_1}}\right] \right\}
    \nonumber \\
  &
    \leq
    2 \exp \left\{-cn  \min \left[\frac{\delta^2}{B^2 \|X\|_{\psi_1}^2},
    \frac{\delta}{B\|X\|_{\psi_1}}\right] \right\}.
\end{align}
Thus, with probability $1-\delta$, using Lemmas~\ref{lem:subexp-norm} and~\ref{lem:sub_exp-mix},
\begin{align*}
  (II)=
  \left| \frac{1}{n} \sum_{i=1}^n g(X_i, \beta_t)- \mathbb{E}[g(X,\beta_t)] \right|
  \lesssim
  \begin{cases}
    \beta^*\sqrt{\frac{\log(2/ \delta)}{n}},&  \text{if } 0 < \delta \leq B\|X\|_{\psi_1}\\
    \beta^*\frac{\log(2/ \delta)}{n},  &           \text{otherwise}
  \end{cases}
\end{align*}

\subsubsection{4. Recursion:}
Now putting together the bounds for $(I)$ and $(II)$, we have
for any $t>0$,
\begin{align*}
  &
    |\hat{\beta}_{t+1} - \beta_{t+1}|
  \\
  &
    \lesssim
    \frac{(\alpha - 1)^2\alpha \beta^{*2}}
    {[\min\{\hat{\beta}_t, \beta_t\}]^2}
    \Big(
    1 + \frac{1}{\alpha^2}
    +
    4C_{\theta}\max\{1, \alpha^{-2}\} \sqrt{\frac{\eta}{n}}
    \Big)|\hat{\beta}_t - \beta_t|
  \\
  & \qquad
    +
    \beta^* \Big(\sqrt{\frac{\log(2/ \delta)}{n}} + \frac{\log(2/ \delta)}{n}\Big).
\end{align*}
This sets up a recursion in terms of $|\hat{\beta}_{t} - \beta_{t}|$, which we
solve to obtain that at iteration $t$, given an initial estimate $\hat{\beta}_0
= \beta_0$, the absolute loss is bounded like
\begin{align*}
  &
    |\hat{\beta}_{t} - \beta_{t}|
  \\
  &\leq
    \beta^*
    \Big(\sqrt{\frac{\log(2/ \delta)}{n}}
    + \frac{\log(2/ \delta)}{n}\Big)
    +
    O\Big(\frac{1}{\sqrt{n}}
    \sum_{i=1}^{t-1}
    \Big(
    (\alpha - 1)^2\alpha
    ( 1 + \frac{1}{\alpha^2} )
    \Big)^i
    \Big).
\end{align*}
In order to ensure the geometric series contributions remain of smaller order,
we must ensure that we have a sufficiently large sample size, i.e.,
$\sqrt{n} \gtrsim \frac{((\alpha - 1)^2\alpha)^{t-1}}{(\alpha - 1)^2\alpha -1}$,
which must be verified based on the $t$ chosen from Corollary~\ref{cor:pop_conv}
as a function of $\varepsilon$.
\\
\noindent
The conclusion follows by taking $\gamma = \max\{\delta, e^{-\eta}\}$.
\qed

\subsection*{Proof of Corollary \ref{cor:empirical_rate}}
We start by noting that
\begin{align*}
|\hat{\beta}_{t+1}-\beta^*| \leq |\hat{\beta}_{t+1}-\beta_{t+1}| + |\beta_{t+1}-\beta^*|.
\end{align*}
Now, note that, given the assumptions of the theorem, the result follows since
the first term on the RHS above can be bounded using
Theorem~\ref{thm:empirical_bound} and the second term using Corollary~\ref{cor:pop_conv}.
\qed

\section{Additional technical results}
\label{app:technical}
In this section we state and prove technical results that are used in
proving the main statements (see Appendix~\ref{app:main_proofs}) and may be of independent interest.
We start by identifying some key properties of the recursion function of the population EM iterates.
\begin{lem} \label{lem:f_properties}
Let $f(\cdot, \cdot)$ denote the function defined by each step under the
population EM,
\begin{align}
  \label{eq:f}
\beta_{t+1}
  = f(\beta_t, \beta^*)
  = \alpha \mathbb{E}_{\beta^*}[X]
  - (\alpha - 1)\mathbb{E}_{\beta^*}[X \cdot (1+\alpha e^{(1-\alpha)X / \beta_t})^{-1}],
\end{align}
where $\mathbb{E}_{\beta}[\cdot]$ refers to the expectation under the mixture as
defined in~\eqref{eq:mix_density} parameterized by $\beta$.
Then, $f$ satisfies the following properties:
\begin{enumerate}
    \item $f(\beta,\beta) = \beta$
    \item $f(\beta, \mu)$ is increasing with respect to its first argument for
      all $\beta>0$.
\end{enumerate}
\end{lem}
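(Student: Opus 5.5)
The plan is to verify the two properties separately. The first follows from the tower property of conditional expectation. The second follows from differentiating under the expectation, reusing the derivative of $g$ already computed in \eqref{eq:partial_g}.

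\emph{Fixed point.} Fix $\beta>0$ and let $(X,Z)\sim p_\beta$ as in \eqref{eq:mix_density}. The proof of Proposition~\ref{prop:closed_form} shows, via Bayes' formula, that
\[
w_\beta(X):=(1+\alpha e^{(1-\alpha)X/\beta})^{-1}=\mathbb{P}_\beta(Z=1\mid X).
\]
So, when the weight and the law both use the same $\beta$, the tower property gives
\[
\mathbb{E}_\beta[X w_\beta(X)]=\mathbb{E}_\beta[X\,\mathbb{P}_\beta(Z=1\mid X)]=\mathbb{E}_\beta[X\mathbf{1}\{Z=1\}]=\tfrac12\beta,
\]
since the $Z=1$ component is $\text{Exp}(\beta)$ with mean $\beta$. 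Likewise $\mathbb{E}_\beta[X]=\tfrac12(\beta+\beta/\alpha)$. Substituting into \eqref{eq:f},
\[
f(\beta,\beta)=\tfrac{\alpha}{2}\Big(\beta+\tfrac{\beta}{\alpha}\Big)-\tfrac{\alpha-1}{2}\beta=\tfrac{\alpha\beta+\beta-\alpha\beta+\beta}{2}=\beta .
\]

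\emph{Monotonicity.} Fix the second argument $\mu>0$. By definition $f(\beta,\mu)=\mathbb{E}_\mu[g(X,\beta)]$, with $g$ as in the proof of Theorem~\ref{thm:empirical_bound}. The key step is to justify exchanging $\partial/\partial\beta$ and $\mathbb{E}_\mu$. On any compact interval $\beta\in[b_1,b_2]\subset(0,\infty)$, the bounds used after \eqref{eq:partial_g} give
\[
0\le \frac{\partial g}{\partial\beta}\le \frac{(\alpha-1)^2\alpha X^2}{b_1^2}.
\]
This dominating function is integrable because $\mathbb{E}_\mu[X^2]=\mu^2+\mu^2/\alpha^2<\infty$. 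Dominated convergence (via the mean value theorem applied to difference quotients) then yields
\[
\partial_\beta f(\beta,\mu)=\mathbb{E}_\mu\big[\partial_\beta g(X,\beta)\big].
\]

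The integrand in \eqref{eq:partial_g} is strictly positive for every $X>0$, and $X>0$ almost surely. Hence $\partial_\beta f(\beta,\mu)>0$ for all $\beta>0$, so $f$ is (strictly) increasing in its first argument. Intuitively, increasing $\beta$ makes $e^{(1-\alpha)X/\beta}$ larger, which lowers $w_\beta(X)$, which in turn raises $-(\alpha-1)Xw_\beta(X)$.

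The only mildly delicate step is the interchange of derivative and expectation. The local uniform domination by $X^2/b_1^2$ handles it, because the exponential tails of the mixture give all moments.
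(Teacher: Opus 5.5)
Your proof is correct, and it differs from the paper's only in how you prove monotonicity.

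\emph{Fixed point.} This matches the paper. The paper multiplies the mixture density by the weight $(1+\alpha e^{(1-\alpha)x/\beta})^{-1}$, observes that the product collapses to the single component density $e^{-x/\beta}/\beta$, and integrates to get $\beta$. Your tower-property identity $\mathbb{E}_\beta[X w_\beta(X)] = \mathbb{E}_\beta[X\mathbf{1}\{Z=1\}]$ is exactly that cancellation, phrased probabilistically.

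\emph{Monotonicity.} Here your route differs. The paper takes $\beta_1>\beta_2>0$ and notes directly that $e^{(1-\alpha)x/\beta_1} > e^{(1-\alpha)x/\beta_2}$. Hence $w_{\beta_1}(x) < w_{\beta_2}(x)$ pointwise, and $f(\beta_1,\mu)-f(\beta_2,\mu) = (\alpha-1)\mathbb{E}_\mu[X(w_{\beta_2}(X)-w_{\beta_1}(X))] \geq 0$. You instead differentiate under $\mathbb{E}_\mu$, justified by dominated convergence.

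The paper's comparison is more elementary: it needs no domination step and no moment beyond $\mathbb{E}_\mu[X]<\infty$. Your version costs the $X^2/b_1^2$ bound but gives more. It shows $f$ is differentiable in its first argument, with $\partial_\beta f = \mathbb{E}_\mu[\partial_\beta g]$, and that $f$ is strictly increasing.

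For the monotonicity claim alone, the interchange is unnecessary. Positivity of \eqref{eq:partial_g} for every $X>0$ already makes $\beta\mapsto g(X,\beta)$ increasing pointwise. Taking expectations preserves that inequality, which is precisely the paper's argument.
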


\begin{proof}
  We start with the first property.
  Observe that
  \begin{align*}
    f(\beta, \beta)
    &= \alpha \mathbb{E}_\beta[X] - (\alpha
      -1)\mathbb{E}_\beta[X \cdot (1+\alpha e^{(1-\alpha)X / \beta})^{-1}]
    \\
    &= \frac{(\alpha+1)\beta}{2}-\frac{\alpha -1}{2} \int_0^\infty
x\left(\frac{e^{-x/\beta}}{\beta} + \frac{\alpha e^{-x \alpha
/\beta}}{\beta}\right)(1+\alpha e^{(1-\alpha)x/\beta})^{-1} \diff x
    \\
    &= \frac{(\alpha+1)\beta}{2} - \frac{(\alpha - 1)\beta}{2} = \beta.
  \end{align*}
  Now, for the second property, we start by taking $\beta_1 > \beta_2 >0$.
  Under the assumption that $\alpha > 1$, we note that
  \begin{align*}
    e^{(1-\alpha)x/\beta_1} > e^{(1-\alpha)x/\beta_2}
  \end{align*}
  and thus
  \begin{align*}
    (1+\alpha e^{(1-\alpha)x/\beta_1})^{-1} <
    (1+\alpha e^{(1-\alpha)x/\beta_2})^{-1}.
  \end{align*}
Therefore, for any fixed $\mu$, we have
\begin{align*}
  &f(\beta_1, \mu) - f(\beta_2, \mu)
  \\
  &
    = \frac{\alpha-1}{2}
    \int_0^\infty x\left( \frac{e^{-x/\mu}}{\mu} + \frac{\alpha e^{-x/\mu}}{\mu}\right)
    \left[ \frac{1}{1+\alpha e^{(1-\alpha)x/\beta_2}} - \frac{1}{1+\alpha
    e^{(1-\alpha)x/\beta_1}}\right] \diff x\\
  &\geq 0.
\end{align*}
    Since $\beta_1, \beta_2$ were arbitrarily chosen, the result holds.
    \qed
\end{proof}
The following lemma proves a sufficiently tight lower bound on the partial derivative of
$f$ as defined in~\eqref{eq:f}.
\begin{lem}\label{lem:partial_deriv}
  For any $\mu, \beta>0$ and $\alpha >1$, and $f$ defined as in~\ref{eq:f},
  \begin{align*}
    \frac{\partial f(\beta,\mu)}{\partial \mu} \geq \frac{\alpha+1}{2} -
    \frac{\alpha-1}{2}\left(\frac{3}{e} + \frac{1}{\alpha} \right).
  \end{align*}
\end{lem}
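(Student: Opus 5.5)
The plan is to differentiate $f(\beta,\mu)$ in $\mu$ explicitly, isolate the term that can be computed exactly, and bound the remaining integral against the positive parts of the derivative kernels. Write $w_\beta(x) = (1+\alpha e^{(1-\alpha)x/\beta})^{-1}$. By Lemma~\ref{lem:f_properties}'s setup, $w_\beta$ is increasing in $x$ and $w_\beta\in[(1+\alpha)^{-1},1)$. Since $\E_\mu[X]=\frac{\mu}{2}(1+\frac1\alpha)$, the first term of \eqref{eq:f} is $\frac{(\alpha+1)\mu}{2}$, and its $\mu$-derivative is exactly $\frac{\alpha+1}{2}$. Hence it suffices to show
\begin{align*}
  \frac{\partial}{\partial\mu}\int_0^\infty x\Big(\frac{e^{-x/\mu}}{\mu}+\frac{\alpha e^{-\alpha x/\mu}}{\mu}\Big)w_\beta(x)\,\diff x \le \frac{3}{e}+\frac1\alpha ,
\end{align*}
because this quantity enters $f$ with the factor $-\frac{\alpha-1}{2}$. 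Differentiation under the integral is justified by dominated convergence: $|w_\beta|\le1$, and the integrands are exponentially decaying uniformly for $\mu$ in compacts.

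Next I would compute the two kernels. They are $k_1(x)=\frac{x}{\mu^2}(\frac{x}{\mu}-1)e^{-x/\mu}$ and $k_2(x)=\frac{\alpha x}{\mu^2}(\frac{\alpha x}{\mu}-1)e^{-\alpha x/\mu}$, with $\int k_1=1$ and $\int k_2=\frac1\alpha$; these are the derivatives of the component means. For the first component I bound $\int k_1w_\beta\le\int (k_1)_+$, using $0\le w_\beta\le1$. The substitution $u=x/\mu$ gives
\begin{align*}
  \int(k_1)_+ = \int_1^\infty u(u-1)e^{-u}\,\diff u=\frac5e-\frac2e=\frac3e ,
\end{align*}
which produces the $\frac3e$ term.

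For the second component the naive positive-part bound gives only $\frac{3}{e\alpha}$, which is larger than the required $\frac1\alpha$. This is the main obstacle. I would first write $\int k_2 w_\beta=\frac1\alpha-\int k_2(1-w_\beta)$. The error term $\int k_2(1-w_\beta)$ can be positive only through the region $x<\mu/\alpha$, where $k_2<0$ and $1-w_\beta$ is largest. So I need to show this excess is absorbed by slack left in the first-component bound.

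That slack is $\int(k_1)_-w_\beta+\int(k_1)_+(1-w_\beta)$, which I discarded above. It lives on $x<\mu$, a set containing $[0,\mu/\alpha]$, and $1-w_\beta$ multiplies $(k_1)_+$ on $x>\mu$. Concretely, I would aim to prove
\begin{align*}
  \int_0^{\mu/\alpha}(k_2)_-(1-w_\beta)\,\diff x\le\int(k_1)_-w_\beta+\int(k_1)_+(1-w_\beta).
\end{align*}
The argument would use the monotonicity of $w_\beta$: for $x\le\mu/\alpha<\mu$ we have $1-w_\beta(x)\ge 1-w_\beta(\mu)$, and I would pass to a worst case in which $w_\beta$ is replaced by a step function at a threshold (a bathtub/rearrangement argument, since both sides are linear in $w_\beta$ with $0\le w_\beta\le1$ and $w_\beta$ monotone). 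This reduces the inequality to a one-parameter comparison of explicit incomplete-gamma integrals in the threshold, and that comparison would then be checked directly. If the joint inequality fails for some threshold, the fallback is to keep $w_\beta\ge(1+\alpha)^{-1}$ as an extra lower bound in the extremal problem, which strengthens the slack term $\int(k_1)_-w_\beta$.

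Once the combined bound $\int(k_1+k_2)w_\beta\le\frac3e+\frac1\alpha$ holds uniformly in $\beta,\mu>0$, multiplying by $-\frac{\alpha-1}{2}$ and adding $\frac{\alpha+1}{2}$ gives the stated lower bound.
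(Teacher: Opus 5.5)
Your reduction to showing $I:=\int_0^\infty (k_1+k_2)w_\beta\,\diff x\le \frac3e+\frac1\alpha$ is correct, and so are the values $\int k_1=1$, $\int k_2=\frac1\alpha$, $\int(k_1)_+=\frac3e$. The gap is in your primary route for the second component: it cannot succeed.

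The class you plan to rearrange over is monotone $w$ with $0\le w\le 1$. It contains the step function with $w=0$ on $[0,\mu/\alpha)$ and $w=1$ on $[\mu/\alpha,\infty)$. For this $w$,
\[
\int_0^\infty (k_1+k_2)w\,\diff x=\Big(1+\frac1\alpha+\frac1{\alpha^2}\Big)e^{-1/\alpha}+\frac{3}{e\alpha}.
\]
At $\alpha=2$ this equals $\frac74e^{-1/2}+\frac{3}{2e}\approx 1.613$, which exceeds $\frac3e+\frac12\approx 1.604$. Your sufficient inequality fails for the same $w$: the left side is $\frac1\alpha(\frac3e-1)\approx 0.052$ and the right side is $\frac3e-\frac74e^{-1/2}\approx 0.042$. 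So no bathtub or step-function argument over that class can prove the lemma. The term you discarded in the first step, $\int(k_1)_-w_\beta$ together with $w_\beta\ge(1+\alpha)^{-1}$, is not slack to be recovered later; it is the essential ingredient.

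Your fallback is exactly what is needed. Once you use it, neither monotonicity of $w_\beta$ nor any extremal problem is required, because pointwise bounds already give
\[
\int_0^{\mu/\alpha}(k_2)_-(1-w_\beta)\,\diff x\le\frac{\alpha}{1+\alpha}\int(k_2)_-=\frac{3/e-1}{1+\alpha}\le\int(k_1)_-w_\beta .
\]
Equivalently, bound $w_\beta\le 1$ on the positive parts of both kernels and $w_\beta\ge(1+\alpha)^{-1}$ on their negative parts. This yields
\[
I\le\frac3e+\frac{3}{e\alpha}-\frac{1}{1+\alpha}\Big(1+\frac1\alpha\Big)\Big(\frac3e-1\Big)=\frac3e+\frac1\alpha ,
\]
where the cancellation comes from $\frac{1}{1+\alpha}(1+\frac1\alpha)=\frac1\alpha$. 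This is precisely the paper's proof. Your proposal becomes complete only once the fallback is promoted to the main argument; as written, the primary plan stalls.
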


\begin{proof}
    We start first by writing out the full partial derivative of $f$ with
    respect to its second argument.
    \begin{align}
      \label{eq:partial_f}
      \frac{\partial f}{\partial \mu}
      &
      = \frac{\alpha +1}{2}
      \\
      & \
      - \frac{\alpha -1}{2}
      \underbrace{\int_0^\infty x
      \Big[
      \frac{e^{-x/\mu}}{\mu^2} \big(\frac{x}{\mu} - 1\big)
      +\frac{\alpha e^{-x\alpha / \mu}}{\mu^2} \big(\frac{\alpha x}{\mu}-1\big)
      \Big]
      \big(1+\alpha e ^{(1-\alpha)x/\beta}\big)^{-1} \diff x}_{I}.
        \nonumber
    \end{align}
    From here, we partition the integral term, $I$ in~\eqref{eq:partial_f}
    so that each integral can be bound separately:
    \begin{align}
      I &=
          \int_0^\mu
          \frac{e^{-x/\mu}}{\mu^2}\Big( \frac{x}{\mu} - 1\Big)
          (1+\alpha e ^{(1-\alpha)x/\beta})^{-1} \diff x
          \tag{I.1}
          \label{eq:I.1}
      \\
        & \qquad
          + \int_0^{\mu / \alpha}
          \frac{\alpha e^{-x \alpha /\mu}}{\mu^2}\Big( \frac{\alpha x}{\mu} - 1\Big)
          (1+\alpha e ^{(1-\alpha)x/\beta})^{-1} \diff x
          \tag{I.2}
          \label{eq:I.2}
      \\ & \qquad
           +\int_\mu^\infty
           \frac{e^{-x/\mu}}{\mu^2} \Big( \frac{x}{\mu} - 1\Big)
           (1+\alpha e ^{(1-\alpha)x/\beta})^{-1} \diff x
           \tag{I.3}
           \label{eq:I.3}
      \\
        & \qquad
          + \int_{\mu / \alpha}^\infty
          \frac{\alpha e^{-x \alpha /\mu}}{\mu^2}\Big( \frac{\alpha x}{\mu} - 1\Big)
          (1+\alpha e ^{(1-\alpha)x/\beta})^{-1} \diff x.
          \label{eq:I.4}
          \tag{I.4}
    \end{align}
    Note that the integrals~\eqref{eq:I.1} and~\eqref{eq:I.2} are negative
    everywhere and the integrals~\eqref{eq:I.3} and~\eqref{eq:I.4} are positive
    everywhere. Furthermore, for $\beta>0$, note that
    \begin{align*}
      (1+\alpha)^{-1}
      \leq (1 + \alpha e^{(1-\alpha)x / \beta})^{-1}
      \leq 1
      \qquad
      \text{for } x \in [0, \infty).
    \end{align*}
    Thus,
    \begin{align*}
        \eqref{eq:I.3} +\eqref{eq:I.4}
      &\leq \int_\mu^\infty
        \frac{x e^{-x/\mu}}{\mu^2}\Big( \frac{x}{\mu} - 1\Big) \diff x
        + \int_{\mu / \alpha}^\infty \frac{x \alpha e^{-x \alpha /\mu}}{\mu^2}\Big( \frac{\alpha x}{\mu} - 1\Big) \diff x
      \\ &
           =\int_0^\infty \frac{x e^{-x/\mu}}{\mu^2}\Big( \frac{x}{\mu} - 1\Big) \diff x
           + \int_0^\infty \frac{x \alpha e^{-x \alpha /\mu}}{\mu^2}\Big( \frac{\alpha x}{\mu} - 1\Big) \diff x
      \\
      &\qquad
        - \int_0^\mu\frac{x e^{-x/\mu}}{\mu^2}\Big( \frac{x}{\mu} - 1\Big) \diff x
        + \int_0^{\mu /\alpha} \frac{x \alpha e^{-x \alpha /\mu}}{\mu^2}\Big( \frac{\alpha x}{\mu} - 1\Big) \diff x
    \end{align*}
    and
    \begin{align*}
      \eqref{eq:I.1} + \eqref{eq:I.2}
      \leq \frac{1}{1+\alpha} \Big(
      \int_0^\mu\frac{x e^{-x/\mu}}{\mu^2}\Big( \frac{x}{\mu} - 1\Big)\diff x
      +  \int_0^{\mu / \alpha} \frac{x \alpha e^{-x \alpha /\mu}}{\mu^2}\Big( \frac{\alpha x}{\mu} - 1\Big) \diff x \Big).
    \end{align*}
    Therefore,
    \begin{align}
        I& \leq
           \frac{-\alpha}{1+\alpha}
           \Big(
           \int_0^\mu\frac{x e^{-x/\mu}}{\mu^2}\Big( \frac{x}{\mu} - 1\Big)\diff x
           +  \int_0^{\mu / \alpha} \frac{x \alpha e^{-x \alpha /\mu}}{\mu^2}\Big( \frac{\alpha x}{\mu} - 1\Big) \diff x
           \Big)
           + \label{eq:int_by_parts}
      \\ &
           \int_0^\infty\frac{x e^{-x/\mu}}{\mu^2}\Big( \frac{x}{\mu} - 1\Big) \diff x
           +
           \int_0^\infty \frac{x \alpha e^{-x \alpha /\mu}}{\mu^2}\Big( \frac{\alpha x}{\mu} - 1\Big) \diff x.
           \label{eq:moment}
    \end{align}
    We recognize \eqref{eq:moment} as functions of the first moment of each of the
    mixture components, which can be computed directly.
    For~\eqref{eq:int_by_parts}, however, we apply integration by parts to
    compute the integrals.
    This leaves us with the bound
    \begin{align*}
      I
      & \leq
        \frac{-\alpha}{1+\alpha}\Big(1+\frac{1}{\alpha}\Big) \Big( 1- \frac{3}{e}\Big)
        + 2\mu^2\cdot\frac{1}{\mu^2} - \mu\cdot \frac{1}{\mu}
        + \frac{2\mu^2}{\alpha}\cdot\frac{\alpha}{\mu^2}-\frac{\mu}{\alpha}\cdot \frac{1}{\mu}
      \\
      &= \frac{3}{e}+\frac{1}{\alpha}.
    \end{align*}
    Thus, the lower bound on the partial derivative is
    \begin{align*}
      \frac{\partial f}{\partial \mu}
      \geq
      \frac{\alpha +1}{2} -\frac{\alpha -1}{2} \Big( \frac{3}{e} + \frac{1}{\alpha}\Big).
    \end{align*}
    We note that this bound lies in the interval $(0,1)$ for $\alpha \in
    (1,\alpha_{\max})$, where $\alpha_{\max} = \frac{3+\sqrt{9+4(3-e)e}}{2(3-e)}
    \approx 11.49$.
    \qed
  \end{proof}
  \noindent
  The following lemmas and corollary provide bounds on sub-Exponential norms of
  sub-Exponential random variables.
  \begin{lem}\label{lem:subexp-norm}
    Let $X \sim \text{Exp}(\beta)$ be an exponential random variable with scale
    parameter $\beta$.
    Then its sub-Exponential norm is given by
    \begin{align*}
      \|X\|_{\psi_1} = 2\beta.
    \end{align*}
\end{lem}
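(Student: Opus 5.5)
We will compute the norm directly from the definition used in \cite[Definition 2.8.4]{vershynin2020high}, namely
\begin{align*}
  \|X\|_{\psi_1} = \inf\Big\{ s>0 : \E\big[\exp(|X|/s)\big] \leq 2 \Big\}.
\end{align*}
Since $X \sim \text{Exp}(\beta)$ is nonnegative, $|X| = X$. The whole argument reduces to evaluating the moment generating function of $X$ at the point $1/s$ and solving one inequality in $s$.

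The first step is to compute $\E[e^{X/s}]$ for every $s>0$. We have
\begin{align*}
  \E\big[e^{X/s}\big] = \int_0^\infty \frac{1}{\beta} e^{-x(1/\beta - 1/s)} \diff x .
\end{align*}
If $s \leq \beta$, the exponent $-x(1/\beta - 1/s)$ is nonnegative. The integrand then does not decay, so the expectation is $+\infty$, and no such $s$ belongs to the admissible set. If $s > \beta$, the integral converges and equals
\begin{align*}
  \frac{1/\beta}{1/\beta - 1/s} = \frac{s}{s-\beta} = \Big(1 - \frac{\beta}{s}\Big)^{-1}.
\end{align*}

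The second step is to solve $\big(1-\beta/s\big)^{-1} \leq 2$ over $s>\beta$. This is equivalent to $1 - \beta/s \geq 1/2$, that is, $s \geq 2\beta$. The admissible set is therefore exactly $[2\beta,\infty)$, and its infimum is $2\beta$. The value is attained, since at $s=2\beta$ the expectation equals exactly $2$. This gives $\|X\|_{\psi_1} = 2\beta$.

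There is no genuine obstacle here. The only points needing care are the divergence of the moment generating function for $s \leq \beta$, which must be excluded explicitly, and the use of the precise constant $2$ in the definition of the norm. If one instead adopted one of the equivalent characterizations in \cite[Proposition 2.7.1]{vershynin2020high}, the result would hold only up to universal constants. The exact value $2\beta$ is specific to this Orlicz-norm definition.
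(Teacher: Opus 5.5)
Your proposal is correct and follows the same route as the paper: evaluate the moment generating function $\E[e^{X/K}] = (1-\beta/K)^{-1}$ and solve $(1-\beta/K)^{-1}\le 2 \iff K \ge 2\beta$ in the Orlicz-norm definition of $\|\cdot\|_{\psi_1}$. You also explicitly exclude $K \le \beta$, where the expectation diverges; the paper's proof leaves this implicit.
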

\begin{proof}
  By the moment generating function of an exponential random variable,
  \begin{align*}
    \mathbb{E}\Big[ e^{X / K}\Big] = \frac{1/\beta}{1/\beta-1/K} = \frac{1}{1-\beta/K}.
  \end{align*}
  Then, by~\cite[Definition 2.8.4]{vershynin2020high}, the sub-Exponential norm
  is defined as
  \begin{align*}
    \|X\|_{\psi_1} = \inf \{K : \mathbb{E}[e^{|X| / K}] \leq 2 \},
  \end{align*}
  and as such,
  \begin{align*}
    \mathbb{E}\Big[ e^{|X| / K}\Big] \leq 2
    \iff
    K \geq 2\beta
  \end{align*}
  and the result follows.
  \qed
\end{proof}

\begin{lem} \label{lem:mono}
Let $X, Y $ be non-negative random variables such that $X \leq Y$ almost
everywhere. Then, the sub-Exponential norm $\| \cdot \|_{\psi_1}$ as defined in
\cite[Definition 2.8.4] {vershynin2020high} satisfies the following:
\begin{align*}
 \|X\|_{\psi_1} \leq \|Y\|_{\psi_1}.
\end{align*}
In other words, the sub-Exponential norm is monotonically increasing.
\end{lem}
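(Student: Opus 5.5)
The argument goes straight through the definition of the sub-Exponential norm,
\begin{align*}
  \|X\|_{\psi_1} = \inf\{K>0 : \mathbb{E}[e^{|X|/K}] \leq 2\},
\end{align*}
together with monotonicity of the exponential and of expectation. The idea is to show that the set of admissible $K$ for $Y$ is contained in the set of admissible $K$ for $X$. The infimum over the larger set is then no bigger, which gives $\|X\|_{\psi_1} \leq \|Y\|_{\psi_1}$.

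Concretely, I take any $K>0$ with $\mathbb{E}[e^{|Y|/K}] \leq 2$. Since $X,Y \geq 0$ and $X \leq Y$ almost everywhere, we have $|X| = X \leq Y = |Y|$ almost surely. Because $u \mapsto e^{u/K}$ is increasing for $K>0$, this gives $e^{|X|/K} \leq e^{|Y|/K}$ almost surely. Monotonicity of expectation for nonnegative random variables (valid even if the values are $+\infty$) then yields
\begin{align*}
  \mathbb{E}[e^{|X|/K}] \leq \mathbb{E}[e^{|Y|/K}] \leq 2,
\end{align*}
so $K$ is admissible for $X$. Hence $\{K : \mathbb{E}[e^{|Y|/K}]\leq 2\} \subseteq \{K : \mathbb{E}[e^{|X|/K}]\leq 2\}$, and taking infima gives the claim.

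There is no real obstacle; the only care needed is the degenerate case. If the admissible set for $Y$ is empty, then $\|Y\|_{\psi_1} = \infty$ (with the usual convention $\inf\emptyset = \infty$) and the inequality is trivial. If $Y = 0$ almost surely, then $X = 0$ almost surely, both norms are $0$, and the inclusion argument still covers this case. Nonnegativity is what lets us drop the absolute values; without it, $X \leq Y$ would not imply $|X| \leq |Y|$. This is exactly where the hypothesis is used, and I would note it explicitly since the lemma is later applied to $g(X,\beta) \leq \alpha X$ in Corollary~\ref{cor:subexp-bond}.
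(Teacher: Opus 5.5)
Your proof is correct and follows the paper's argument: from $0 \le X \le Y$ almost surely you get $\mathbb{E}[e^{|X|/K}] \le \mathbb{E}[e^{|Y|/K}]$ for every $K>0$, so every $K$ admissible for $Y$ is admissible for $X$, and the infimum defining $\|X\|_{\psi_1}$ is at most $\|Y\|_{\psi_1}$. Your explicit treatment of the empty admissible set and of the case $Y=0$ is a small refinement that the paper leaves implicit.
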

\begin{proof}
    As $0 \leq X \leq Y$ a.e., we have
    \begin{align}\label{subexp-norm:ineq}
    \mathbb{E}\left[e^{|X| /K}\right] \leq \mathbb{E}\left[e^{|Y| /K}\right],
    \end{align}
    for all $K >0$. Let $K_Y = \inf \{ K : \mathbb{E}\left[e^{|Y| /K} \right] \leq
    2\}$.
    Then by (\ref{subexp-norm:ineq}), we clearly have that
    \begin{align*}
      \inf \{ K : \mathbb{E}\left[e^{|X| /K} \right] \leq 2\} \leq K_Y,
    \end{align*}
     that is, $K_X = \inf \{ K : \mathbb{E}\left[e^{|X| /K} \right] \leq 2\} \leq
     K_Y$.
     But $K_X,K_Y$ are exactly the sub-Exponential norms of $X,Y$ respectively,
     and so we have shown the claim.
     \qed
\end{proof}

\begin{cor}\label{cor:subexp-bond}
    Let $X$ be a non-negative random variable and let $h(X)$ be some
    non-negative function of $X$ that is bounded almost everywhere, that is, $h(X)
    \leq C$ a.e., for some constant $C>0$. Then,
    \begin{align*}
      \|Xh(X)\|_{\psi_1} \leq C\|X\|_{\psi_1}.
    \end{align*}
  \end{cor}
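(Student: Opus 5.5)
The plan is to reduce the statement to Lemma~\ref{lem:mono}. Since $X \geq 0$ and $0 \leq h(X) \leq C$ almost everywhere, we have the pointwise sandwich $0 \leq Xh(X) \leq CX$ almost everywhere. Both $Xh(X)$ and $CX$ are non-negative random variables, so Lemma~\ref{lem:mono} applies directly with the roles $X \mapsto Xh(X)$ and $Y \mapsto CX$. This gives
\begin{align*}
  \|Xh(X)\|_{\psi_1} \leq \|CX\|_{\psi_1}.
\end{align*}

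The second step is the absolute homogeneity of the $\psi_1$ norm, namely $\|CX\|_{\psi_1} = C\|X\|_{\psi_1}$ for $C>0$. I would verify this directly from the definition in \cite[Definition 2.8.4]{vershynin2020high}. Substituting $K = CK'$ gives $\mathbb{E}[e^{C|X|/K}] = \mathbb{E}[e^{|X|/K'}]$. Hence $\{K : \mathbb{E}[e^{C|X|/K}] \leq 2\} = C\cdot\{K' : \mathbb{E}[e^{|X|/K'}]\leq 2\}$, and taking infima yields the identity. Combining the two displays proves the corollary.

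I do not expect a real obstacle; the only care needed is at the boundary cases. If $\|X\|_{\psi_1} = \infty$, the inequality holds trivially. The "almost everywhere" qualifiers are harmless, since expectations ignore null sets. In the application to $g(X,\beta)$ in the proof of Theorem~\ref{thm:empirical_bound}, one takes $h(x) = \alpha - (\alpha-1)(1+\alpha e^{(1-\alpha)x/\beta})^{-1}$, which lies in $[1,\alpha]$, so the bound holds with $C = \alpha$ (the constant $B$ there).

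For the centered version used in \eqref{eq:prob_rate}, one additionally invokes the centering inequality $\|Y - \mathbb{E}Y\|_{\psi_1} \lesssim \|Y\|_{\psi_1}$ from \cite{vershynin2020high}. That step lies outside the present corollary.
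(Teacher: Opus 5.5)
Your proposal is correct and matches the paper's proof: both apply Lemma~\ref{lem:mono} to the almost-everywhere bound $0 \leq Xh(X) \leq CX$. The only difference is that you explicitly verify the homogeneity $\|CX\|_{\psi_1} = C\|X\|_{\psi_1}$ via the substitution $K = CK'$, a step the paper leaves implicit.
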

  \begin{proof}
The corollary follows directly from Lemma~\ref{lem:mono}, as $CX \geq X h(X)$ almost everywhere.

\end{proof}
\begin{lem}
    \label{lem:sub_exp-mix}
    Let $X\sim \sum_{k=1}^n \pi_k X_k$ be a random variable generate from a
mixture model where each component $X_k$ is a sub-Exponential random variable.
Then, $X$ is also a sub-Exponential random variable with sub-Exponential norm
\begin{align*}
  \|X\|_{\psi_1} \leq \max_k \|X_k\|_{\psi_1}.
\end{align*}
\end{lem}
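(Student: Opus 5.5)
The plan is to go straight from the definition of the sub-Exponential norm, $\|Y\|_{\psi_1} = \inf\{K>0 : \mathbb{E}[e^{|Y|/K}] \leq 2\}$, exactly as in the proofs of Lemmas~\ref{lem:subexp-norm} and~\ref{lem:mono}. The key observation is that a mixture does not add random variables: its law is a convex combination of the component laws. So $X$ should be read through the latent-variable representation of Section~\ref{sec:fmms}, as $X = X_Z$ with $\mathbb{P}(Z=k) = \pi_k$ and $\sum_k \pi_k = 1$. The expectation of any nonnegative function of $X$ then decomposes as
\begin{align*}
  \mathbb{E}\big[e^{|X|/K}\big] = \sum_{k} \pi_k \, \mathbb{E}\big[e^{|X_k|/K}\big],
\end{align*}
by conditioning on $Z$ (equivalently, integrating against the mixture density $\sum_k \pi_k p_{\theta_k}$).

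First, set $K^* = \max_k \|X_k\|_{\psi_1}$. This is finite because each component is sub-Exponential and there are finitely many components. Second, fix any $K > K^*$. For each $k$ we have $K > \|X_k\|_{\psi_1}$, and since $K \mapsto \mathbb{E}[e^{|X_k|/K}]$ is nonincreasing in $K$, the definition of the infimum gives $\mathbb{E}[e^{|X_k|/K}] \leq 2$. Third, plug these bounds into the displayed decomposition. This gives $\mathbb{E}[e^{|X|/K}] \leq \sum_k \pi_k \cdot 2 = 2$, so $\|X\|_{\psi_1} \leq K$. Letting $K \downarrow K^*$ yields the claim. In particular, $X$ is sub-Exponential.

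The only step that needs any care is the second one. The infimum in the definition need not be attained, so working with $K$ strictly above $K^*$ and using monotonicity in $K$ avoids that issue. Alternatively, monotone convergence shows the infimum is attained, which gives the bound at $K = K^*$ directly. I would also remark that the notation $\sum_k \pi_k X_k$ must be interpreted as a mixture of laws and not as a weighted sum of random variables. For a weighted sum one would only get the triangle-inequality bound $\sum_k \pi_k \|X_k\|_{\psi_1}$, which happens to give the same conclusion, but the mixture interpretation is the one used in Theorem~\ref{thm:empirical_bound}. Combined with Lemma~\ref{lem:subexp-norm}, applying this to the model~\eqref{eq:mix_dens} gives $\|X\|_{\psi_1} \leq 2\beta^*$.
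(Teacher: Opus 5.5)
Your proof is correct and follows the same route as the paper. Both write $\mathbb{E}[e^{|X|/K}] = \sum_k \pi_k\, \mathbb{E}[e^{|X_k|/K}]$ through the mixture density and bound each term by $2$ at $K = \max_k \|X_k\|_{\psi_1}$. Your choice of $K > K^*$ followed by a limit (or monotone convergence, which shows the infimum is attained) justifies a step the paper simply asserts at $K = K^*$, where its one-line monotonicity remark about $e^{x/s}$ versus $e^{x/t}$ is also stated in the reversed direction.
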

\begin{proof}
  In order to show that $X$ is sub-Exponential, it suffices to prove that there
  exists some $K>0$ such that $\mathbb{E}\left[e^{ |X| /K}\right] \leq 2$.
  First, by definition of $X$ being a mixture,
  \begin{align*}
    \mathbb{E}\left[e^{ |X| /K}\right]
    &= \int_{- \infty}^\infty e^{ |x| /K} \sum_{k=1}^n \pi_k p_k(x) \diff x
      =\sum_{k=1}^n \pi_k \int_{- \infty}^\infty e^{ |x| /K} p_k(x) \diff x
    \\
    &= \sum_{k=1}^n \pi_k\mathbb{E}\left[e^{ |X_k| /K}\right],
  \end{align*}
  where $p_k(\cdot)$ is the density of the $k$th component of the mixture model.
  Now, setting $K = \max_k \|X_k\|_{\psi_1}$, we have
  \begin{align*}
    \sum_{k=1}^n \pi_k\mathbb{E}\left[e^{ |X_k| /K} \right]
    \leq 2\cdot\sum_{k=1}^n
    \pi_k = 2,
  \end{align*}
  where the second inequality holds by the fact that $e^{x/s} \leq e^{x/t}$ for
  any $0 < s \leq t$. It thus follows by the definition of the sub-Exponential norm that $\|X\|_{\psi_1} \leq \max_k \|X_k\|_{\psi_1}.$ \qed
\end{proof}

\begin{lem}[Second moment concentration]
  \label{lem:moment_concentration}
  Let $X \sim \frac{1}{2}\text{Exp}(\beta)+\frac{1}{2}\text{Exp}(\beta/\alpha)$.
  Then, with probability $1- e^{-\delta}$
  \begin{align*}
    \left| \frac{1}{n} \sum_{i=1}^n X_i^2 - \E[X^2]\right|
    \leq
    4 C_\theta \beta^{2}\max\{1, \alpha^{-2}\} \sqrt{\frac{\delta}{n}},
  \end{align*}
  where $C_{\theta}$ depends only on the tail index $\theta$ of the Weibull
  parametrization of $X^2$.
\end{lem}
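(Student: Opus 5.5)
We will treat $Y = X^2$ as a heavy-tailed (sub-Weibull) random variable and apply a concentration inequality for sums of i.i.d.\ sub-Weibull variables, with the mixture structure handled through the Orlicz norm, exactly as in Lemma~\ref{lem:sub_exp-mix}.

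\textbf{Step 1: tail of a single component.} For $X_k\sim\text{Exp}(b)$ we have $\mathbb{P}(X_k^2>s)=e^{-\sqrt{s}/b}$. Hence $X_k^2$ is Weibull with tail index $\theta=1/2$. Equivalently, $\mathbb{E}[\exp((X_k^2)^{1/2}/K)]=\mathbb{E}[e^{X_k/K}]$, so the $\psi_{1/2}$ quasi-norm satisfies $\|X_k^2\|_{\psi_{1/2}}=\|X_k\|_{\psi_1}^2=4b^2$ by Lemma~\ref{lem:subexp-norm}.

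\textbf{Step 2: the mixture.} We repeat the argument of Lemma~\ref{lem:sub_exp-mix} verbatim with $e^{|x|/K}$ replaced by $e^{(x^2/K)^{1/2}}$. This gives
\[
\|X^2\|_{\psi_{1/2}}\le \max\{4\beta^2,4\beta^2/\alpha^2\}=4\beta^2\max\{1,\alpha^{-2}\}.
\]
The max is kept to match the stated form, although since $\alpha>1$ it equals $4\beta^2$. Centering changes this only by a constant absorbed into $C_\theta$, because $\E[X^2]$ is itself bounded by a constant multiple of the quasi-norm.

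\textbf{Step 3: concentration.} We then invoke a Bernstein-type inequality for sums of i.i.d.\ centered sub-Weibull variables, e.g.\ Kuchibhotla--Chakrabortty, Theorem 3.1. It states that with probability at least $1-e^{-\delta}$ (up to a factor $2$ absorbed by adjusting constants),
\[
\Big|\frac1n\sum_i (Y_i-\E Y)\Big|\le C_\theta\,\|Y\|_{\psi_\theta}\Big(\sqrt{\tfrac{\delta}{n}}+\tfrac{\delta^{1/\theta}(\log n)^{1/\theta}}{n}\Big).
\]
For the regime $\delta\lesssim n^{\theta/(2-\theta)}$ up to logs, i.e.\ the moderate-deviation regime relevant to the paper's use, the second term is dominated by the first. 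Absorbing it into $C_\theta$ then yields the stated bound $4C_\theta\beta^2\max\{1,\alpha^{-2}\}\sqrt{\delta/n}$.

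\textbf{Main obstacle.} The obstacle is Step 3. Because $\theta=1/2<1$, the heavy-tail term $\delta^2/n$ cannot be dropped uniformly in $\delta$. A pure $\sqrt{\delta/n}$ bound therefore genuinely requires restricting $\delta\lesssim n^{1/3}$ (modulo logarithms), or else letting $C_\theta$ absorb this restriction. I would state this restriction explicitly as the regime in which the lemma is applied, which is consistent with the sample-size condition $n\gtrsim\mathrm{poly}(\alpha;\varepsilon)$ already imposed in Theorem~\ref{thm:empirical_bound}.
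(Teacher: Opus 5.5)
Your route is the paper's own. Steps 1--2 match it exactly: $\theta=\tfrac12$ Weibull tails for $X^2$, $\|X_\beta^2\|_{\psi_{1/2}}=4\beta^2$ from the exponential MGF, and the mixture bounded by the larger component. Step 3 applies a sub-Weibull Bernstein inequality, as the paper does, although the paper cites Zhang--Wei, Proposition~3, rather than Kuchibhotla--Chakrabortty. Your handling of the centering, which the paper skips, is fine.

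The obstacle you flag in Step 3 is real. It is a defect in the lemma and in the paper's proof, not in your proposal. The paper states the bound with only the $\sqrt{\delta/n}$ term and silently drops the heavy-tail term $\delta^{1/\theta}/n=\delta^2/n$. That term cannot be dropped uniformly in $\delta$, as the following shows.

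Since $X_i^2\ge 0$, we have $\sum_i(X_i^2-\E[X^2])\ge X_1^2-n\E[X^2]$. Also $\E[X^2]=\beta^2(1+\alpha^{-2})\le 2\beta^2$. For $\delta\ge n$ we have $n\le\sqrt{n\delta}\le\delta$. Hence, for any constant $C$ (playing the role of $4C_\theta$) and any $\delta\ge n$,
\[
\mathbb{P}\Big(\frac1n\sum_{i=1}^n X_i^2-\E[X^2]>C\beta^2\sqrt{\delta/n}\Big)\ge \mathbb{P}\big(X_1^2>(2+C)\beta^2\delta\big)\ge \tfrac12 e^{-\sqrt{(2+C)\delta}},
\]
which exceeds $e^{-\delta}$ once $\delta$ is large. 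In particular the lemma already fails at $n=1$.

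A sharper version uses Chebyshev to keep the other $n-1$ terms within $O(\beta^2\sqrt n)$ of their mean with probability at least $\tfrac34$. This shows the bound fails whenever $\delta\ge K n^{1/3}$ for a large enough $K=K(C)$. So your restriction $\delta\lesssim n^{\theta/(2-\theta)}=n^{1/3}$, or equivalently keeping an additive $\delta^2/n$ term, is necessary rather than an artifact of the proof.

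Two small corrections to your write-up:
\begin{itemize}
\item For i.i.d.\ sums, the correct heavy-tail term is $C_\theta\|Y\|_{\psi_\theta}\,\delta^{1/\theta}/n$, with no $(\log n)^{1/\theta}$ factor. With that factor your displayed inequality would be false at $n=1$, where $\log n=0$. Without it, the threshold is exactly $n^{1/3}$, with no logarithms.
\item Your restricted version is all that Theorem~\ref{thm:empirical_bound} needs. There the lemma is used with $\eta=\log(1/\gamma)$ fixed while $n$ is taken large.
\end{itemize}
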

\begin{proof}
  We first note that by Lemmas ~\ref{lem:sub_exp-mix} and~\ref{lem:subexp-norm}
  each $X_i$ is sub-Exponential with sub-Exponential norm $\|X_i\|_{\psi_1} =
  2\max\{\beta, \frac{\beta}{\alpha}\}$.
  In order to show concentration of $X^2$, we need to define the sub-Weibull property.
  We use the definition as provided in \cite[Corollary
  6.1]{zhang2020concentration} which states that a random variable $Y$ is said
  to be sub-Weibull with tail index $\theta>0$ if
  \begin{align*}
    \mathbb{P}(Y\geq y) \leq a \exp(-by^{\theta}), \ \forall\ y\geq 0,
  \end{align*}
  where $a, b>0$ are some fixed constants.
  Note that by this definition, each $X^2_i$ is sub-Weibull with tail
  index $\theta = \frac{1}{2}$ since
  \begin{align*}
    \mathbb{P}(X^2 \geq x)
    = \mathbb{P}(X \geq \sqrt{x})
    = 2\exp\Big(-\frac{\sqrt{x}}{\beta}\Big).
  \end{align*}
  Now define the sub-Weibull norm~\cite[Definition
  6.2]{zhang2020concentration} for each $X_i^2$ as
  \begin{align*}
    \|Y\|_{\psi_\theta} = \inf \{C \in (0, \infty): \E[e^{|Y|^\theta/C^{\theta}}]\leq 2\}.
  \end{align*}
  We now compute the sub-Weibull norm for the square of each component of
  the mixture. Let $X_{\beta}$ be the random variable corresponding to the
  mixture component with shape parameter $\beta$. $X_{\beta/\alpha}$ is
  defined identically. The sub-Weibull norm of $X_{\beta}$ with tail index
  $\theta=\frac{1}{2}$ is given by
  \begin{align*}
    \|X_{\beta}^2\|_{\psi_{1/2}}
    =
    \inf \Big\{C \in (0, \infty):
    \E\Big[e^{\frac{(X_{\beta}^2)^{1/2}}{C^{1/2}}}\Big] \leq 2
    \Big\}.
  \end{align*}
  Thus, using the moment generating function of an exponential random
  variable with shape parameter $\beta$,
  \begin{align*}
    \frac{1}{1-\frac{\beta}{\sqrt{C}}}\leq 2
    \iff
    C \geq 4\beta^2.
  \end{align*}
  Using the fact that the sub-Weibull norm of a mixture is bounded by
  the maximum sub-Weibull norm over all components,
  \begin{align*}
    \|X\|_{\psi_{1/2}}
    \leq
    \max\{\|X_{\beta}^2\|_{\psi_{2}}, \|X_{\beta/\alpha}^2\|_{\psi_{2}}\}
    = 4 \beta^{2}\max\{1, \alpha^{-2}\}.
  \end{align*}
  Now, applying the concentration bound from~\cite[Proposition
  3]{zhang2022sharper}, with probability $1- e^{-\delta}$
  \begin{align*}
    \left| \frac{1}{n} \sum_{i=1}^n X_i^2 - \E[X^2]\right|
    \lesssim
    4 C_\theta \beta^{2}\max\{1, \alpha^{-2}\} \sqrt{\frac{\delta}{n}},
  \end{align*}
  where $C_{\theta}$ denotes an absolute constant that depends only on the tail
  index $\theta$.
  \qed
\end{proof}

\end{subappendices}

\end{document}